\newcommand\scalemath[2]{\scalebox{#1}{\mbox{\ensuremath{\displaystyle #2}}}}
\numberwithin{equation}{section}
\theoremstyle{plain}
\newtheorem{Th}{Theorem}[section]
\newtheorem{Lemma}[Th]{Lemma}
\newtheorem{Cor}[Th]{Corollary}
\newtheorem{Prop}[Th]{Proposition}
\theoremstyle{definition}
\newtheorem{Def}[Th]{Definition}
\newtheorem{Conj}[Th]{Conjecture}
\newtheorem{Remark}[Th]{Remark}
\newtheorem{?}[Th]{Problem}
\newcommand{\p}{\mathbb{P}}
\newcommand{\tree}{\mathbb{T}}
\newcommand{\nat}{\mathbb{N}}
\newcommand{\gpr}{\square} 
\newcommand{\e}{\varepsilon}
\newcommand{\eps}{\varepsilon}
\newcommand{\R}{{\mathbb R}}
\newcommand{\Prob}{\mathbb{P}}
\newcommand{\UST}{\mathsf{UST}}
\newcommand{\FSF}{\mathsf{FSF}}
\newcommand{\FSFc}{\FSF_w}
\newcommand{\FSFw}{\FSF_w}
\newcommand{\FUSF}{\mathsf{FUSF}}
\begin{document}

\title{
Connectedness of the Free Uniform Spanning Forest as a function of edge weights}
\author{
Marcell Alexy 
\and 
M\'arton Borb\'enyi 
\and
Andr\'as Imolay
\and
\'Ad\'am Tim\'ar }
\date{}
\maketitle

\begin{abstract}
Let $G$ be the Cartesian product of a regular tree $T$ and a finite connected transitive graph $H$. It is shown in \cite{PT} that the Free Uniform Spanning Forest ($\FSF$) of this graph may not be connected, but the dependence of this connectedness on $H$ remains somewhat mysterious. We study the case when a positive weight $w$ is put on the edges of the $H$-copies in $G$, and conjecture that the connectedness of the $\FSF$ exhibits a phase transition. For large enough $w$ we show that the $\FSF$ is connected, while for a large family of $H$ and $T$, the $\FSF$ is disconnected when $w$ is small (relying on \cite{PT}). Finally, we prove that when $H$ is the graph of one edge, then for any $w$, the $\FSF$ is a single tree, and we give an explicit formula for the distribution of the distance between two points within the tree. 
\end{abstract}

\section{Introduction}
Consider some finite graph $H$ with a weight function (``conductances'') $\hat w: E(H)\to \R^+_0$ on its edges. One may take an unweighted graph and view it as one where the weights are constant 1. 
Choose a spanning tree of $H$ at random, where the probability of a spanning tree $T$ will be proportional to $\Pi_{e\in E(T)} \hat w(e)$. The so-defined probability measure is called the {\it Uniform Spanning Tree} ($\UST$) of $(H,\hat w)$. For a given {\it infinite} graph $G$ and conductances $\hat w$, consider some exhaustion of $G$ by a sequence of connected finite graphs $G_n$, and let $\UST (G_n)$ be the $\UST$ of the weighted graph $(G_n,\hat w|_{G_n})$. It is known
that the weak limit of $\UST (G_n)$ exists, meaning that for any $e_1,...,e_k,f_1,...,f_m\in E(G)$, $\Prob(e_1,...,e_k\in \UST (G_n),\, f_1,..., f_m\not\in\UST (G_n))$ converges, and to the same limit for any choice of the sequence $G_n$. The limiting measure is called the {\it Free Uniform Spanning Forest} ($\FUSF$ or $\FSF$) of $(G,\hat w)$. See \cite{LP} for background, references, and the basic properties of the $\FSF$.
 
It was generally expected that the $\FSF$ of ``tree-like graphs'' would consist of a single tree, until G\'abor Pete and the last author showed in \cite{PT} that for suitably chosen $d$ and connected finite transitive graph $H$, the Cartesian product $\tree^d\gpr H$ of the $d$-regular tree $\tree^d$ and $H$ has a disconnected $\FSF$. From the proof, however, it is not clear what happens to the disconnectedness of the $\FSF$ if we do some natural changes to $d$ and $H$, e.g., increase $d$ with a fixed $H$, or fix $d$ and take a lift of $H$. No monotonicity result of this type is known. A question in the same spirit is to ask how the connectedness of the $\FSF$ changes if we put constant positive weight $w$ on every edge of the $H$-copies in $\tree^d\gpr H$ and then change $w$. What happens if $w$ is very small or large? Does there always exist some $w$ where there is a single $\FSF$ component? Is there always some $w$ where there are infinitely many $\FSF$ components (with an $H$ that has at least 2 vertices)? Is there any kind of monotonicity in $w$, and perhaps even a critical value that separates the phases of disconnectedness and connectedness? The present paper contributes to the understanding of these questions. In particular, initial steps are taken 
in Conjecture \ref{main_conjecture}.

Let $H$ be a finite connected graph and $\tree^d$ be the $d$-regular tree. For an arbitrary given $w>0$, define the weight function $\hat w$ on the edges of $G=\tree^d\gpr H$ so that $\hat w(e)=1$ if $e$ is of the form $\{(x_1,y),(x_2,y)\}$ and $\hat w(e)=w$ if $e$ is of the form $\{(x,y_1),(x,y_2)\}$. Define
$\FSFw(G)$ as the $\FSF$ of $(G,\hat w)$.

\begin{Conj}\label{main_conjecture}
If $\FSF_{w'}$ and $\FSF_{w''}$ are connected for some $w''>w'>0$ then $\FSFw$ is connected for every $w\in [w',w'']$. Similar statement holds for disconnectedness.
Moreover,
there exists a $\gamma\in [0,\infty]$ such that $\FSF_{w}$ has a unique component whenever $w>\gamma$, and $\FSF_{w}$ has infinitely many components whenever $w<\gamma$.
\end{Conj}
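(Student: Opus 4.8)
\medskip\noindent\textbf{Towards a proof of Conjecture~\ref{main_conjecture}.} The plan is to split the statement into two independent ingredients and treat them separately: a \emph{dichotomy}---for every $w$, $\FSFw(G)$ has a.s.\ either one component or infinitely many---and a \emph{monotonicity}---the set $W_c:=\{w>0:\FSFw(G)\text{ is connected a.s.}\}$ is an up-set. Granting both, the conjecture follows: the two interval statements are immediate from $W_c$ (and its complement) being an up-set (a down-set), and the quantitative claim is obtained by setting $\gamma:=\inf W_c$, so that $w>\gamma$ forces connectedness and $w<\gamma$ forces disconnectedness, hence, by the dichotomy, infinitely many components.

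The dichotomy is not special to our situation. Since every tree-edge of $G=\tree^d\gpr H$ carries weight $1$ and every $H$-edge carries weight $w$, the weighted network $(G,\hat w)$ is invariant under the transitive unimodular group $\mathrm{Aut}(\tree^d)\times\mathrm{Aut}(H)$ (here we use that $H$ is transitive). Hence the number of $\FSF$ components is an a.s.\ constant by ergodicity, and that this constant lies in $\{1,\infty\}$ is part of the general theory of the $\FSF$ on unimodular transitive networks (see \cite{LP}); so this ingredient reduces to quoting a known result, and the whole content of the conjecture becomes the monotonicity. Equivalently, since ``$\FSFw(G)$ is connected a.s.''\ is exactly the event that $\Prob_{\FSFw}(u\leftrightarrow v)=1$ for every pair $u,v\in V(G)$, it suffices to prove: \emph{for each fixed pair $u,v$, the connection probability $\Prob_{\FSFw}(u\leftrightarrow v)$ is nondecreasing in $w$.}

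Two one-sided inputs are already available and fix the endpoints of the picture. For \emph{large} $w$ one shows connectedness: as $w\to\infty$ the $H$-conductances blow up, so on any finite exhaustion the $\UST$ converges to the $\UST$ of the graph obtained by contracting each copy $\{x\}\times H$ to a point---namely $\tree^d$ with every edge replaced by $|V(H)|$ parallel edges, whose $\FSF$ is a single tree---and the content of the large-$w$ result is to make this valid for all sufficiently large finite $w$, not merely in the limit. For \emph{small} $w$ and a large family of $(H,T)$ one shows disconnectedness by comparison with \cite{PT}: when $w$ is small the $H$-edges are rarely used and $G$ behaves like a sparse perturbation of a disjoint union of $|V(H)|$ copies of $\tree^d$, which lets one import the disconnectedness proved there. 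In particular, once the large-$w$ result is in hand, $W_c\ne\emptyset$, so that if the monotonicity holds then $\gamma\in[0,\infty)$.

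The hard part is the middle range: that connectedness, once gained, is never lost. The obvious attempt---a monotone coupling $\FSF_{w'}\subseteq\FSF_{w''}$ of edge sets for $w'\le w''$, which would finish the proof since a forest can only lose components when edges are added---cannot work, because the $\FSFw$-probability that a tree-edge $e$ is present equals the free effective resistance across $e$, and this is \emph{decreasing} in $w$ by Rayleigh monotonicity (raising the $H$-conductances can only lower effective resistances); so $\FSFw(G)$ is not stochastically increasing in $w$ as a point process. The determinantal description is no help in its naive form either: $\FSFw(G)$ is a determinantal process whose kernel is the orthogonal projection, inside the appropriate weighted $\ell^2$-space of edge flows, onto the orthocomplement of the ($w$-independent) cycle space, and in any finite approximation these are projections of equal rank, so the domination criteria for determinantal measures (\cite{LP}) can apply only when the kernels coincide. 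What seems to be needed instead is a monotone coupling at the level of the induced \emph{partition} of $V(G)$ into $\FSF$ components---showing that the partition of $\FSF_{w''}(G)$ is coarser than that of $\FSF_{w'}(G)$---or a direct analysis of how the harmonic-measure and $\mathbf{HD}$ data of $(G,\hat w)$ vary with $w$; either route will presumably require an argument that genuinely uses the product structure $\tree^d\gpr H$, and this is where we expect the main obstacle to lie. A natural intermediate goal, and a test of any such approach, is to show that $w\mapsto\Prob_{\FSFw}(u\leftrightarrow v)$ is at least continuous, which is already nontrivial since connectivity is a tail event.
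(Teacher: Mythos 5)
The statement you are addressing is a \emph{conjecture} in the paper, not a theorem: the authors offer no proof, and the paper's substantive contribution towards it is the pair of one-sided results in Theorem~\ref{large_small} (connectedness for all $w>W$; disconnectedness for $w\le 1$ under size assumptions on $H$ and $d$). Your proposal does not claim to prove the conjecture either, and is best read as a correct diagnosis of where the difficulty lies; as such it is fully consistent with what the paper does and does not establish.

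Your reduction is accurate. The dichotomy (number of $\FSF$ components a.s.\ in $\{1,\infty\}$) is indeed known for the transitive unimodular networks considered here---the paper itself points to \cite{HuNa}, \cite{Ti} and a short argument in \cite{PT}---so, once Theorem~\ref{large_small} is granted, the whole content of the conjecture is the assertion that $W_c=\{w>0:\FSFw \text{ connected a.s.}\}$ is an up-set (equivalently that $W_c$ and its complement are both intervals). Your observation that a naive monotone coupling of edge sets cannot work is also correct: for a fixed tree-edge $e$ of weight $1$, $\Prob(e\in\FSFw)$ equals the free effective resistance across $e$, which by Rayleigh monotonicity is nonincreasing in $w$, while the analogous probability for an $H$-edge goes the other way, so $\FSFw$ is not stochastically monotone as a point process. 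The remark about determinantal domination is likewise sound: in any finite approximation $G_n$ the transfer-current kernel is an orthogonal projection of fixed rank $|V(G_n)|-1$ (for the $w$-weighted inner product), and two projections of equal finite rank comparable in the positive-operator order must coincide, so the domination criterion for determinantal measures is vacuous here.

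The one caution I would add is logical rather than mathematical: you assert ``$W_c$ is an up-set'' as the thing to prove, which is \emph{a priori} strictly stronger than the first sentence of the conjecture ($W_c$ is convex). The two become equivalent only after importing the large-$w$ part of Theorem~\ref{large_small}, which you do, but it is worth keeping the weaker interval statement in view since it might be attackable by perturbative or continuity arguments even if the half-line statement is not. Your proposed intermediate target---continuity of $w\mapsto\Prob_{\FSFw}(u\leftrightarrow v)$---is not addressed in the paper and appears to be genuinely open; establishing it would already be a contribution beyond what the paper proves.
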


We mention that having more than one component automatically implies having infinitely many for a much wider class of transitive graphs than the ones considered here (\cite{HuNa}, \cite{Ti}, or see \cite{PT} for a short direct proof for the special product graphs that we consider here).

The simplest nontrivial example of a graph of the form $\tree^d\gpr H$ is the case when $H=K_2$ is a single edge. It is not clear what to expect: on one hand the graph may be ``too close'' to the tree to produce disconnected FSF, on the other hand one may speculate that for small enough $w$ the relatively large degree of the tree could be the reason for a similar phenomenon as in \cite{PT} and make the FSF fall apart. Pengfei Tang has shown in \cite{Ta} that the $\FSF$ is connected for the unweighted question for $\tree^d\gpr H$. (His proof was worked out for a slightly different graph, but it is mentioned in \cite{Ta} that a similar argument can be applied for $G=\tree^d\gpr K_2$.) However, his method does not give a quantitative insight into connectivity within the $\FSF$ of $G$, and it does not seem to apply to the case when weights are added to the edges. The method of \cite{PT}, which needs $H$ to be relatively large, did not give an insight into this special case either. 
We settle this question of $H=K_2$ through an enumeration, which will also enable us to bound the decay of the distance between two points (see Lemma \ref{distance_distr}).

\begin{Th}\label{always1}
For every $w>0$, $\FSF_w(\tree^d\gpr K_2)$ is connected.
\end{Th}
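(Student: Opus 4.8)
The plan is to exploit Wilson's algorithm rooted at infinity and the well-known fact that the $\FSF$ of $(G,\hat w)$ is connected if and only if, in two independent copies of the wired uniform spanning forest / loop-erased random walk picture, two independent loop-erased random walks started from (nearby) vertices intersect almost surely — equivalently, that the network random walk on $G=\tree^d\gpr K_2$ has the property that from any two starting points the loop-erased trajectories to infinity merge with probability one. Concretely, I would use the criterion (see \cite{LP}) that the $\FSF$ is connected iff for independent network random walks $X, Y$ started at two adjacent vertices, $\Prob[\mathrm{LE}(X)\cap Y\neq\emptyset]=1$; by transitivity of $G$ it suffices to check one pair of starting points, say the two endpoints $(o,0)$ and $(o,1)$ of an $H$-edge over the root $o$ of $\tree^d$.

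The main computation is to understand the network random walk on $\tree^d\gpr K_2$. Since $K_2$ is a single edge with weight $w$, the walk at a vertex $(x,i)$ with $x$ at distance $\ge 1$ from the root has $d$ tree-neighbors each of conductance $1$ and one $H$-neighbor $(x,1-i)$ of conductance $w$, so it steps along the $H$-edge with probability $w/(d+w)$ and to each tree neighbor with probability $1/(d+w)$. Projecting to $\tree^d$, the walk is a (lazy, because of the $H$-moves) random walk on the tree; the key quantity to control is the ``effective drift toward infinity'' and, more importantly, the way the $K_2$-coordinate evolves. I would carry this out in the following steps. First, reduce the connectedness question, via Wilson's algorithm rooted at infinity, to the event that two loop-erased walks from $(o,0)$ and $(o,1)$ eventually use a common tree-edge with the same $H$-coordinate. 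Second, project everything to $\tree^d$: loop-erasure of the projected walk is a loop-erased random walk on $\tree^d$, which is simply the unique ray from $o$ to the (a.s.\ unique) boundary point the walk converges to, so the two projected loop-erased walks automatically share a tail ray in $\tree^d$. Third — and this is the crux — show that conditionally on sharing that tail ray, the two $H$-coordinate processes along the shared ray couple, i.e.\ at the first tree-vertex of the common ray that is visited by both walks after they have ``locked onto'' the ray, the $H$-coordinates agree with probability one in the limit. This is where one must track, for a single trajectory, the pair (current tree-vertex on the ray, current $H$-bit); the $H$-bit performs its own dynamics driven by the number of $H$-steps taken between consecutive fresh tree-vertices, and one needs to show the two walks' bits agree infinitely often, or rather agree from some point on along the loop-erased ray.

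The hard part will be Step three: controlling the joint law of the tree-ray and the $H$-coordinate after loop-erasure, because loop-erasure is a global operation and does not commute with the projection in a way that keeps the $H$-bit Markovian. I expect the paper instead to bypass this by a direct enumeration — computing, for $\tree^d\gpr K_2$, the relevant transfer operator / generating function for spanning forests restricted to a pair of adjacent ``rungs'' and taking the exhaustion limit explicitly, which simultaneously yields the distance distribution in Lemma \ref{distance_distr}. So the honest version of my plan is: set up the finite exhaustion $G_n$ by balls in $\tree^d$, write the probability that $(o,0)$ and $(o,1)$ lie in the same tree of $\UST(G_n)$ as a ratio of weighted spanning-tree counts, use the transfer-matrix structure coming from the tree recursion (each subtree contributes an independent factor, and the $K_2$ fiber gives a fixed $2\times2$ local weight depending on $w$), solve the resulting fixed-point equation for the per-branch generating function, and show the ratio tends to $1$ as $n\to\infty$ for every $w>0$. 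The obstacle is purely in making the fixed-point analysis of that recursion clean enough to also extract the sharp decay of $\Prob[\mathrm{dist}_{\FSF}((o,0),(o,1))=k]$; connectedness itself should fall out as the statement that this distribution is a genuine probability distribution summing to $1$.
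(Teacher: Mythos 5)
Your opening route, via ``two loop-erased random walks intersect a.s.\ $\Rightarrow$ connected,'' is based on a criterion for the \emph{wired} spanning forest, not the free one. The intersection/merging characterization (Benjamini--Lyons--Peres--Schramm, see also \cite{LP}) applies to the WUSF, whose Wilson's-algorithm-rooted-at-infinity construction is pathwise in exactly that way; for the FSF there is no analogous random-walk intersection criterion, and on this graph the two forests genuinely differ (the WUSF of $\tree^d\gpr K_2$ has infinitely many components, whereas the theorem asserts the FSF has one). So that part of your plan, including the step of projecting LERW to $\tree^d$ and comparing $H$-coordinates on a shared ray, would prove the wrong statement even if the technical obstacles you flag could be overcome.

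Your fallback ``honest version,'' however, is essentially what the paper does. The paper exhausts by balls $T_n$ in $\tree^d$, introduces the weighted spanning-tree counts $a_n=t(\hat A_n)$ and $a_n'=t(\hat A_n$ through the root edge$)$ on perfect $(d-1)$-ary subtrees, and derives the recursions of Lemmas~\ref{recursion1}--\ref{t}; the ratio $c_n=a_n'/a_n$ satisfies a one-dimensional fixed-point equation whose limit $c$ solves the quadratic~\eqref{quadratic}. This is precisely the ``transfer/per-branch generating function'' you anticipate, except that it is scalar rather than $2\times2$ because the two tree counts are tracked via their ratio. From there, Lemma~\ref{m length} counts spanning trees by the number $m$ of bags used by the path between $(u,0)$ and $(u,1)$, giving the geometric law $q_m$ of Lemma~\ref{distance_distr}, and Lemma~\ref{sumq} verifies $\sum_m q_m=1$ --- exactly the ``the distance distribution is a genuine probability distribution'' endgame you predict. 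The one step you do not mention, and which you would need to make the argument complete, is the reduction of full connectedness to intra-bag connectedness (Lemma~\ref{elegbag}): in any exhaustion the event that two adjacent bags are joined by an edge of the UST has probability $1$, so once each bag is a.s.\ internally in one component, a countable intersection finishes the proof.

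Also, a small imprecision in your phrasing: in the finite graphs $\UST(G_n)$ everything is trivially in the same tree, so what is actually computed and passed to the limit is the distribution of the \emph{length} (in bags) of the UST-path between $(u,0)$ and $(u,1)$, and connectedness in the limit is the statement that this length is a.s.\ finite, i.e.\ that no mass escapes to $\infty$.
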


We verify Conjecture \ref{main_conjecture} for large enough $w$, with only the assumption that $H$ is regular, finite and connected. Furthermore, we roughly sketch how the arguments in \cite{PT} can be applied to show that the conjecture holds in a neighborhood of 0 for a large class of $\tree^d$ and $H$, as in the next theorem.

\begin{Th}\label{large_small}
For every finite connected regular graph $H$ and $d$-regular tree $\tree^d$, there exists a $W<\infty$ such that 
for every $w>W$ the forest
$\FSFw(\tree^d\gpr H)$ is connected. Conversely, 
if $H$ is transitive, $d$ is large enough compared to the degree in $H$, and $|H|>d^{5/2}$, then for every $w\leq 1$ the forest
$\FSF_w(\tree^d\gpr H)$ has infinitely many components almost surely.
\end{Th}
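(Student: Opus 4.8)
The plan is to treat the two halves separately. For the first (connectedness for large $w$), I would use the standard criterion that the $\FSF$ of $(G,\hat w)$ is connected if and only if, for every pair of vertices $u,v\in G$, the probability that $u$ and $v$ lie in the same component of the wired forest $\FSF$ tends to $1$; but more conveniently, connectedness of the $\FSF$ is equivalent to the statement that for every edge $e$, the loop-erased random walk from one endpoint of $e$ (in the weighted network) hits the other endpoint with probability $1$, i.e.\ the $\FSF$ is connected iff for all $u,v$ the current flow / harmonic measure shows the network is ``recurrent enough'' along $H$-fibres. Concretely, I would fix two vertices $(x,y)$ and $(x,y')$ in the same $H$-fibre and run Wilson's algorithm rooted at infinity: when $w$ is large, the random walk on $(G,\hat w)$, started anywhere, takes a huge number of steps inside the current $H$-copy before ever using a tree-edge, so with probability close to $1$ the walk from $(x,y)$ visits $(x,y')$ before leaving the fibre, forcing them into the same tree. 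A clean way to make this quantitative: the walk restricted to one $H$-copy is a random walk on $H$ with holding, and the chance of traversing a tree-edge in a given $H$-excursion is $O(1/w)$; since $H$ is finite and connected, the walk started at $(x,y)$ hits $(x,y')$ within $O(1)$ steps inside the fibre with probability bounded below, so for $w$ large every two vertices of a common fibre are a.s.\ connected in the $\FSF$, and since the fibres tile $G$ and $\tree^d$ is connected, the whole $\FSF$ is one tree. Choosing $W$ depending only on $|H|$ and the degrees makes this uniform.

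For the second half (infinitely many components for $w\le 1$, under the stated size hypotheses), the strategy is to reduce to the unweighted result of \cite{PT} by a comparison argument, since \cite{PT} already establishes that $\FSF_1(\tree^d\gpr H)=\FSF(\tree^d\gpr H)$ is disconnected when $H$ is transitive, $d$ is large compared to $\deg(H)$, and $|H|>d^{5/2}$. For $w<1$ I would argue by stochastic domination / Rayleigh monotonicity: decreasing the conductance $w$ on the $H$-edges only makes it ``harder'' for the random walk to move within fibres, hence the relevant escape probabilities used in the \cite{PT} argument (the probability that loop-erased walk from a fibre goes to infinity in the tree direction rather than closing up) only increase, so the key inequality that forces more than one component in \cite{PT} is preserved monotonically for all $w\le 1$. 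The most efficient route is to re-run the \cite{PT} computation verbatim with the conductances $\hat w$: the only place $w$ enters is as a factor multiplying the $H$-edge contributions to effective conductances, and one checks their ``tree-escaping'' estimate remains valid as long as $w\le 1$ (indeed their bound becomes slack). Then, as remarked in the excerpt (citing \cite{HuNa}, \cite{Ti}, or the direct argument in \cite{PT}), more than one component implies infinitely many a.s.\ on these transitive product graphs, giving the claimed conclusion.

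Since the excerpt says the second half is only ``roughly sketched'' by adapting \cite{PT}, I would keep that part brief: state the Rayleigh-monotonicity comparison, note that every estimate in \cite{PT} that controls the probability of the decisive event only improves when $w$ is decreased from $1$, and invoke their conclusion together with the zero–one dichotomy for the number of components. The genuinely new content, and the part I would write in full, is the large-$w$ direction.

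The step I expect to be the main obstacle is making the large-$w$ argument uniform and rigorous: one must show not merely that each fixed pair of fibre-vertices is connected with high probability, but that \emph{all} pairs are simultaneously connected almost surely, which requires a Borel–Cantelli type summation over the (countably many) fibres and pairs, controlling the failure probability of ``the Wilson-algorithm branch from $(x,y)$ reaches $(x,y')$ before leaving the fibre'' by something summable — here the $O(1/w)$ per-excursion leakage must be leveraged against the combinatorial growth of $\tree^d\gpr H$, and one has to be careful that Wilson's algorithm rooted at infinity is well-defined on this transient network (which it is, since $G$ is transient for $d\ge 3$) and that the branches are run in an order that makes the union bound work. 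A secondary subtlety is that ``$\FSF$ connected'' should be deduced from ``every two vertices in a common fibre are in the same tree'' — this is immediate because the fibres cover $V(G)$ and any two fibres are joined by a tree-edge whose endpoints lie in those fibres, so the components merge.
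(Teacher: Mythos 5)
Your plan for the first (large-$w$) half has a genuine gap. The observation that a random walk started at $(x,y)$ hits $(x,y')$ before leaving the fibre with probability $1-O(1/w)$ does not, and cannot, give the almost-sure statement needed: for any fixed $w$ that probability is strictly less than $1$, and the issue is not one of summing over countably many pairs (transitivity of $G$ plus the $0$\,--\,$1$ law already reduce to a single pair), but of what happens to the loop-erased path on the event (of fixed positive probability) that the walk from $a$ does leave the bag and wander far into $\tree^d$ before hitting $b$. In addition, your use of Wilson's algorithm rooted at infinity produces the \emph{wired} spanning forest; the $\FSF$ must be analyzed through the exhaustion, i.e.\ by bounding $\lim_n\p\bigl(\text{LERW}_{T_n\gpr wH}(a\to b)\ \text{leaves}\ T_m\gpr wH\bigr)$ uniformly in $n$, and that is precisely what the paper does. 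The idea the proposal is missing is the paper's ``memorable bag'' mechanism (Definition~\ref{memorable}, Proposition~\ref{erased}, Lemma~\ref{fascinationstreet}): a far-away bag $D$ can survive into the loop-erasure only if, after the walk's last visit to $D$, some intermediate bag on the ray back to $U$ is \emph{not} fully covered before the walk moves on. For $w$ large, each bag the walk enters is fully covered before it leaves with probability $\geq 1-\eps$, where $\eps<\tfrac{1}{2(d-1)}$, so the chance that $D$ at tree-distance $\ell$ stays memorable decays like $\eps^{\ell}$, which beats the $d(d-1)^{\ell-1}$ growth of bags. This geometric decay in the tree-distance, not a per-excursion $O(1/w)$ leakage bound, is what makes the LERW path tight; together with a geometric bound on the number of $U$-to-$U$ excursions and the reduction Lemma~\ref{elegbag} (which you do state), it yields Theorem~\ref{ossztetel}.

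For the second (small-$w$) half, your appeal to Rayleigh monotonicity / stochastic domination as a black box is not justified and is in tension with the paper itself: the introduction explicitly states that no monotonicity result is known, and Conjecture~\ref{main_conjecture} is precisely such a monotonicity statement left open. The paper instead re-derives the key inequalities of \cite{PT} with $w$ inserted, observing concretely that replacing $2k^6$ by $2k^6/w$ and $d$ by $wd$ in the relevant display keeps Lemma~3.1 of \cite{PT} valid, while the ``time spent in a bag'' estimates only improve for $w\le 1$. Your intuition that the bounds ``become slack'' is correct in outcome, but the argument must be the explicit re-estimation, not a monotonicity principle that is not available.
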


\subsection{Notation}
Denote by $t(G)$ the (weighted) number of spanning trees of a finite graph $G$. Let $T \gpr wH$ be the graph obtained by the Cartesian product of $T$ and $H$, with $w$ weights on the edges of the form $\{(x,y_1),(x,y_2)\}$, and weight $1$ on the rest of the edges. For shorthand throughout Section~\ref{K_2} we use $\hat T=T \gpr wK_2$ for any graph $T$. In a graph product $T \gpr wH$ we call {\it bag} the subgraphs of the form $\{v\} \times wH$ where $v \in T$. $T_n$ will denote a ball with radius $n$ around a fix vertex $u$ in $\tree^d$. With a (convenient) slight redundancy, the $\FSF$ of $T\gpr wH$ is the same object as the $\FSF_w$ of $T\gpr H$.

We will rely on one particular consequence of Wilson's algorithm on finite graphs \cite{Wi}, namely, that for a finite connected graph $G$, the path between points $a,b\in V(G)$ within $\UST(G)$ has the same distribution as 
the {\it loop-erased random walk} path
$\text{LERW}_G(a\to b)$ from $a$ to $b$, which is constructed as follows. Run random walk in $G$ starting from $a$ until hitting $b$, and erase all the loops in the order of their appearence along the walk, to obtain a simple path from $a$ to $b$. The same link between the $\UST$ and LERW is true when $G$ has positive edge weights, in which case random walk on this network is understood instead of simple random walk, with weights being the conductances. See Chapter 4.1 of \cite{LP} for more details. In general, for an arbitrary walk $(X_1,..., X_n)$, LE$(X_1,..., X_n)$ will denote its loop-erasure.

\section{Product of a tree and a weighted edge} \label{K_2}

\subsection{Recursive formulas for the number of weighted spanning trees}

Fix a constant $d \geq 3$. Most of the definitions in this section depend on $d$, but we usually will not write it as an index. 

Let $u \in \tree^d$ a fixed vertex, remember that $T_n$ is the ball around $u$ with radius $n$.

\begin{Def} \label{balanced}
Define the {\it perfect $(d-1)$-ary tree} with height $n$ recursively in the following way.
 A perfect $(d-1)$-ary tree with height $0$ is a single vertex, the root.
For $n>0$ a perfect $(d-1)$-ary tree with height $n$ has a root, and it is connected with the roots of $d-1$ pieces of perfect $(d-1)$-ary trees with height $n-1$. 

\noindent For brevity, from now on we call the perfect $(d-1)$-ary tree simply as a perfect tree and we denote the height $n$ perfect tree by $A_n$.
\end{Def}

\begin{figure}[h]
\centering
\begin{minipage}{0.45\textwidth}
\centering
\begin{tikzpicture}[baseline=0,
    vert/.style={circle, draw, minimum size=7, inner sep=3}
    ]
    \draw (0, 0) node[vert] (x0l0) {o};
    
    \foreach \x/\y/\n in {-1.5/-1/x0l1, 1.5/-1/x2l1, 0/-1/x1l1,
    -2.0/-2/x0l2,-1.5/-2/x1l2,-1.0/-2/x2l2,-0.5/-2/x3l2,0.0/-2/x4l2,0.5/-2/x5l2,1.0/-2/x6l2,1.5/-2/x7l2,2.0/-2/x8l2}
    \draw (\x, \y) node[vert] (\n) {};
    \foreach \p/\q in {x0l0/x0l1, x0l0/x1l1, x0l0/x2l1,
        x0l1/x0l2,x0l1/x1l2,x0l1/x2l2,x1l1/x3l2,x1l1/x4l2,x1l1/x5l2,x2l1/x6l2,x2l1/x7l2,x2l1/x8l2}
    \draw (\p) -- (\q);
\end{tikzpicture}
    \begin{minipage}{\textwidth}
    \vspace{1ex} \centering
    $A_2$ with $d=4$
    \end{minipage}
\end{minipage}
\begin{minipage}{0.45\textwidth}
\centering
\begin{tikzpicture}[baseline=0,
    vert/.style={circle, draw, minimum size=7, inner sep=3}
    ]
    \draw (0, 0) node[vert] (x0l0) {u};
    
    \foreach \x/\y/\n in {-1.5/-1/x0l1, 1.5/-1/x2l1, 0/-1/x1l1,
    3/-1/x3l1,
    -2.0/-2/x0l2,-1.5/-2/x1l2,-1.0/-2/x2l2,-0.5/-2/x3l2,0.0/-2/x4l2,0.5/-2/x5l2,1.0/-2/x6l2,1.5/-2/x7l2,2.0/-2/x8l2,2.5/-2/x9l2,3.0/-2/x10l2,3.5/-2/x11l2
    }
    \draw (\x, \y) node[vert] (\n) {};
    \foreach \p/\q in {x0l0/x0l1, x0l0/x1l1, x0l0/x2l1, x0l0/x3l1,
    x0l1/x0l2,x0l1/x1l2,x0l1/x2l2,x1l1/x3l2,x1l1/x4l2,x1l1/x5l2,x2l1/x6l2,x2l1/x7l2,x2l1/x8l2,x3l1/x9l2,x3l1/x10l2,x3l1/x11l2
    }
    \draw (\p) -- (\q);
\end{tikzpicture}
    \begin{minipage}{\textwidth}
    \vspace{1ex} \centering
    $T_2$ with $d=4$
    \end{minipage}
\end{minipage}
\end{figure}

An alternative way to define the perfect tree is that if we delete an edge incident with $u$ from $T_n$, then the component containing $u$ is $A_n$ (and the other component is an $A_{n-1}$).


Let $o$ be the root of $A_n$. Denote by $A_{n-1}^1, A_{n-1}^2, ..., A_{n-1}^{d-1}$ the subgraphs of $A_n$ from the recursive Definition \ref{balanced}, and let their roots be $o_1, o_2, ..., o_{d-1}$, so these are the neighbours of $o$ in $A_n$. 
Let $e$ be the edge between $(o,0)$ and $(o,1)$ in $\hat A_n$.
In $\hat A_n$ and $\hat T_n$ let $(v,0)$ and $(v,1)$ be the two vertices in the bag $\{v \} \gpr wK_2$ for any vertex $v$. Let $G_i$ be the subgraph of $\hat A_n$ spanned by the vertices of $\hat A_{n-1}^i$ and $(o,0)$ and $(o,1)$ for all $1 \leq i \leq d-1$. Note that each edge of $\hat A_n$ is exactly in one of the $G_i$'s except $e$, which is contained in all of the $G_i$'s. 


\begin{Def}
For a shorthand of $t(\hat A_n)$ we use $a_n$, and let $a'_n$ denote the weighted number of spanning trees of $\hat A_n$ containing $e$. 
\end{Def}

We prove recursive formulas for these quantities.

\begin{Lemma} \label{recursion1}
\begin{equation*} \label{t'}
    a'_{n} = w\left(2a_{n-1} + \frac{1}{w} a'_{n-1}\right)^{d-1}
\end{equation*}
\end{Lemma}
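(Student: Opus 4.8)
\emph{Proof idea.} The plan is to reduce the recursion to one clean identity together with elementary graph surgery. The identity is that for every $m$, since the bag edge over the root of $A_m$ has weight $w$, the operation ``delete that edge from a spanning tree that uses it'' is a bijection between spanning trees of $\hat A_m$ containing it and spanning trees of the contraction, scaling the total weight by $w$; hence $a'_m=w\,t(\hat A_m/e)$, where here (and below) $e$ denotes the relevant root bag edge. Applying this with $m=n$ gives $a'_n=w\,t(\hat A_n/e)$. Let $o^*$ be the vertex of $\hat A_n/e$ obtained by identifying $(o,0)$ and $(o,1)$. Every edge of $\hat A_n$ other than $e$ lies in exactly one $G_i$, and $V(G_i)\cap V(G_j)=\{(o,0),(o,1)\}$ for $i\neq j$, so in $\hat A_n/e$ the pieces $G_1/e,\dots,G_{d-1}/e$ are pairwise edge-disjoint and meet only in $o^*$. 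Thus $o^*$ is a cut vertex, the (weighted) spanning-tree count is multiplicative across it, and
\[
a'_n \;=\; w\prod_{i=1}^{d-1}t(G_i/e)\;=\;w\,t(G_1/e)^{\,d-1}
\]
by the symmetry among the $G_i$; it remains to compute $t(G_1/e)$.

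The graph $G_1/e$ is the copy $\hat A_{n-1}^1$ of $\hat A_{n-1}$ together with the two weight-$1$ edges $f_0=\{o^*,(o_1,0)\}$ and $f_1=\{o^*,(o_1,1)\}$. I would split the spanning trees of $G_1/e$ according to how many of $f_0,f_1$ they use. Using exactly one of them, say $f_j$, amounts to choosing $j\in\{0,1\}$ and a spanning tree of $\hat A_{n-1}^1$ (deleting $f_j$ leaves the latter, and conversely), contributing $2a_{n-1}$. Using both $f_0$ and $f_1$ amounts to choosing a spanning $2$-forest of $\hat A_{n-1}^1$ in which $(o_1,0)$ and $(o_1,1)$ lie in different components — they must be separated, as otherwise $f_0,f_1$ close a cycle through $o^*$ — and contracting the root bag edge of $\hat A_{n-1}^1$ identifies such $2$-forests, weight-preservingly, with the spanning trees of that contraction (pulling a spanning tree of the contraction back automatically separates the two bag vertices, since re-contracting would otherwise create a cycle). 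Hence this case contributes $t(\hat A_{n-1}/e)=\tfrac1w a'_{n-1}$ by the identity above with $m=n-1$. Therefore $t(G_1/e)=2a_{n-1}+\tfrac1w a'_{n-1}$, which combined with the display proves the lemma. An alternative to the two-case split is to run two successive deletion–contractions, first on $f_0$ and then on the image of $f_1$, which after the first contraction has become an edge parallel to the root bag edge of $\hat A_{n-1}^1$; this yields the same three summands $a_{n-1}$, $a_{n-1}$, $\tfrac1w a'_{n-1}$.

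The substantive work is purely bookkeeping: checking that identifying $(o,0)$ with $(o,1)$ really makes $o^*$ a cut vertex whose sides are exactly the $G_i/e$ (so that the product formula for $t$ applies), and tracking how the root bag edge of $\hat A_{n-1}^1$ interacts with $f_0$ and $f_1$ under contraction. I expect no conceptual obstacle beyond carefully recording these bijections; once the identity $a'_m=w\,t(\hat A_m/e)$ and cut-vertex multiplicativity are in place, the recursion follows.
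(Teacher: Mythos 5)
Your proof is correct and, at bottom, takes the same approach as the paper: you partition the constraint on spanning trees by the $G_i$'s meeting only at the root bag, and within each $G_i$ split into the case of using exactly one versus both of the cross-edges, giving $2a_{n-1}+\tfrac1w a'_{n-1}$ per piece. The only difference is packaging — the paper reasons directly with spanning trees of $\hat A_n$ containing $e$ (phrasing the both-cross-edges case as ``add back the root bag edge of $\hat A_{n-1}^i$ to get a spanning tree and divide by $w$''), while you first contract $e$, invoke the cut-vertex multiplicativity of $t$ at $o^*$, and phrase the same bijection via $t(\hat A_{n-1}/e)=\tfrac1w a'_{n-1}$; these are the same counting argument stated in contraction language versus intersection language.
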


\begin{figure}[h]
    \label{a'_n figure}
    \centering
    
\begin{minipage}{0.54\textwidth}
\centering
\begin{tikzpicture}[baseline=0,
    vert/.style={circle, draw, minimum size=7, inner sep=3},
    scale=1.5
    ]
    
    \draw (-1.8, 0.1) node[] {$\hat{A}_2$};
    \draw (-1.2, -0.9) node[] {$\hat{A}^1_1$};
    
    \foreach \x/\y/\n in {-0.3/0/l0x0l,+0.3/0/l0x0r,
        -0.8/-1/l1x0l,-0.2/-1/l1x0r,
        -1.3/-2/l2x0l,-0.7/-2/l2x0r,
        -0.0/-2/l2x1l,+0.6/-2/l2x1r,
        +1.3/-2/l2x2l,+1.9/-2/l2x2r}
    \draw (\x, \y) node[vert] (\n) {};
    
    \draw[thick] 
        (l0x0l) -- node[above] {$e$} (l0x0r)
        ;
    \draw[thick]
        (l0x0r) -- (l1x0r)
        (l2x0l) -- (l2x0r)
        (l1x0l) -- (l2x0l)
        (l1x0r) -- (l2x0r)
        (l1x0r) -- (l2x2r)
        (l2x2l) -- (l2x2r)
        (l1x0r) -- (l2x1r)
        (l1x0l) -- (l2x1l)
        ;
    \draw[dotted, thick]
        (l1x0l) -- (l1x0r)
        (l2x1l) -- (l2x1r)
        (l0x0l) -- (l1x0l)
        (l1x0l) -- (l2x2l)
        ;
    \draw 
        (1.0, -1) node[] (l1xdl) {$...$}
        (1.6, -1) node[] (l1xdr) {$...$}
        ;
    \draw[dotted, thick]
        (l0x0l) -- (l1xdl)
        ;
    \draw[thick]
        (l0x0r) -- (l1xdr)
        ;
    \draw[dashed, thin]
        (-1.5,-2.0) -- (-0.8, -0.7) -- (-0.3,-0.7) -- (+2.3, -2.0)
        ;
\end{tikzpicture}
\begin{minipage}{0.95\textwidth}
\vspace{1ex}
One of the edges from $\{(o,0),(o_1,0)\}$ 
\newline and $\{(o,1),(o_1,1)\}$ is in $T$.
\newline
$T \cap \hat{A}^1_1$ is a spanning tree in $\hat{A}^1_1$.
\end{minipage}
\end{minipage}
\begin{minipage}{0.45\textwidth}
\centering
\begin{tikzpicture}[baseline=0,
    vert/.style={circle, draw, minimum size=7, inner sep=3},
    scale=1.5
    ]
    \draw (-1.8, 0.1) node[] {$\hat{A}_2$};
    \draw (-1.2, -0.9) node[] {$\hat{A}^1_1$};
    
    \foreach \x/\y/\n in {-0.3/0/l0x0l,+0.3/0/l0x0r,
        -0.8/-1/l1x0l,-0.2/-1/l1x0r,
        -1.3/-2/l2x0l,-0.7/-2/l2x0r,
        -0.0/-2/l2x1l,+0.6/-2/l2x1r,
        +1.3/-2/l2x2l,+1.9/-2/l2x2r}
    \draw (\x, \y) node[vert] (\n) {};
    
    \draw[thick] 
        (l0x0l) -- node[above] {$e$} (l0x0r);
    \draw[dashed, thick]
        (l1x0l) -- node[above] {$e_1$} (l1x0r);
    \draw[thick]
        (l0x0r) -- (l1x0r)
        (l0x0l) -- (l1x0l)
        (l1x0l) -- (l2x0l)
        (l1x0r) -- (l2x0r)
        (l1x0r) -- (l2x2r)
        (l2x2l) -- (l2x2r)
        (l1x0r) -- (l2x1r)
        (l1x0l) -- (l2x1l)
        ;
    \draw[dotted, thick]
        (l2x0l) -- (l2x0r)
        (l2x1l) -- (l2x1r)
        (l1x0l) -- (l2x2l)
        ;
    \draw 
        (1.0, -1) node[] (l1xdl) {$...$}
        (1.6, -1) node[] (l1xdr) {$...$}
        ;
    \draw[dotted, thick]
        (l0x0l) -- (l1xdl)
        ;
    \draw[thick]
        (l0x0r) -- (l1xdr)
        ;
    \draw[dashed, thin]
        (-1.5,-2.0) -- (-0.8, -0.7) -- (-0.3,-0.7) -- (+2.3, -2.0)
        ;
\end{tikzpicture}

\begin{minipage}{0.95\textwidth}
\vspace{1ex}
Both edges from $\{(o,0),(o_1,0)\}$ 
\newline and $\{(o,1),(o_1,1)\}$
are in $T$.
\newline
$T \cap \hat{A}^1_1 \cup \{e_1\}$ is
a spanning tree in $\hat{A^1_1}$
\end{minipage}
\end{minipage}
\end{figure}

\begin{proof}

The $w$ multiplier comes from the weight $w$ on $e$.

Note that a subgraph $T$ of $\hat A_n$ is a spanning tree containing $e$ if and only if $T \cap G_i$ is a spanning tree containing $e$ for all $1 \leq i \leq d-1$. 

So we need to count the weighted number of spanning trees of $G_i$ containing $e$ (not multiplying with the weight $w$ on $e$ as we already counted the weight of $e$). Let $T$ be such a spanning tree. 
Consider the edges $\{(o,0),(o_i,0)\}$ and $\{(o,1),(o_i,1)\}$. This is a cut of $G_i$, so at least one of them must be in $T$. If exactly one, then we have $2$ options choosing which one, and $T \cap\hat  A^i_{n-1}$ must be a spanning tree of $\hat  A^i_{n-1}$, so there are $2a_{n-1}$ weighted options. 
If both of the edges are in $T$, then $T \cap\hat  A^i_{n-1} \cup \{\{(o_i,0),(o_i,1)\}\}$ is a spanning tree of $\hat  A^i_{n-1}$, and any spanning tree of $\hat A^i_{n-1}$ containing the edge $\{(o_i,0),(o_i,1)\}$ minus the edge $\{(o_i,0),(o_i,1)\}$ does arise as $T \cap\hat  A^i_{n-1}$, so this is bijection, and it gets a $\frac{1}{w}$ multiplier when we count the weighted number, as the weight of $\{(o_i,0),(o_i,1)\}$ is $w$. So this is $\frac{1}{w} a'_{n-1}$ weighted options. Therefore, independently for each $G_i$, we have $\left(2a_{n-1} + \frac{1}{w} a'_{n-1}\right)$ weighted possibilities. The conclusion follows.


\end{proof}

\begin{Lemma} \label{t}
\begin{equation*} 
    a_{n} = a'_{n} + (d-1)a_{n-1}\left(2a_{n-1} + \frac{1}{w} a'_{n-1}\right)^{d-2}
\end{equation*} 
\end{Lemma}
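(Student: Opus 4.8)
The plan is to mimic the case analysis from Lemma~\ref{recursion1}, but now counting \emph{all} spanning trees of $\hat A_n$ rather than only those containing $e$. The natural dichotomy is on whether the special edge $e=\{(o,0),(o,1)\}$ belongs to the spanning tree $T$ or not. If $e\in T$, then by definition these spanning trees are counted by $a'_n$, which gives the first term.

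For the case $e\notin T$, I would argue as follows. Since $T$ is a spanning tree of $\hat A_n$ not using $e$, and since removing $e$ from $\hat A_n$ leaves the vertices $(o,0)$ and $(o,1)$ joined only through the subtrees $\hat A_{n-1}^i$, the tree $T$ must connect $(o,0)$ to $(o,1)$ through exactly one of the $G_i$'s: there is a unique $i$ such that the $(o,0)$--$(o,1)$ path in $T$ passes through $\hat A_{n-1}^i$. Choosing this index $i$ gives a factor $(d-1)$. For this distinguished $G_i$, the restriction $T\cap G_i$ is a spanning tree of $G_i$ \emph{not} containing $e$; by the bijection already established inside the proof of Lemma~\ref{recursion1} (the same cut argument applied to $\{(o,0),(o_i,0)\}$ and $\{(o,1),(o_i,1)\}$, together with the ``$e\notin T$'' constraint forcing both cut edges to be present, or rather: the number of spanning trees of $G_i$ is $a'_{n-1}/w + 2a_{n-1}$ in total and $a'_{n-1}/w$ of them contain $e$\dots) — actually the cleaner route is: the number of spanning trees of $G_i$ that do \emph{not} contain $e$ equals $\bigl(2a_{n-1}+\tfrac1w a'_{n-1}\bigr) - \tfrac1w a'_{n-1}\cdot(\text{something})$; I will instead directly observe that a spanning tree of $G_i$ avoiding $e$ must use at least one of the two cut edges, and if it uses exactly one we get $2a_{n-1}$ options while if it uses both then contracting recovers $\tfrac1w a'_{n-1}$ options, so the count of spanning trees of $G_i$ avoiding $e$ is exactly $\bigl(2a_{n-1}+\tfrac1w a'_{n-1}\bigr)$ \emph{only if} $G_i$ minus $e$ is connected — which it is. Hence this distinguished bag contributes $a_{n-1}\cdot(\text{correction})$; more carefully, $T\cap\hat A_{n-1}^i$ is itself a spanning tree of $\hat A_{n-1}^i$ regardless, giving the $a_{n-1}$ factor, and the connection to $(o,0),(o,1)$ via the two cut edges is forced once we also demand that the remaining $G_j$, $j\neq i$, attach correctly.

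For each of the remaining $d-2$ indices $j\neq i$, the restriction $T\cap G_j$ is a spanning tree of $G_j$ with no constraint about $e$ other than that the $(o,0)$--$(o,1)$ connection is \emph{not} made through $G_j$; but since $e\notin T$ and the connection is already made through $G_i$, the subgraph $T\cap G_j$ is a spanning \emph{forest} of $G_j$ with exactly two components, one containing $(o,0)$ and one containing $(o,1)$, each meeting $\hat A_{n-1}^j$. By the same cut/contraction bijection this is counted by $\bigl(2a_{n-1}+\tfrac1w a'_{n-1}\bigr)$ — here is where I must be careful to match the bookkeeping so that the total comes out to $(d-1)a_{n-1}\bigl(2a_{n-1}+\tfrac1w a'_{n-1}\bigr)^{d-2}$: the distinguished bag $i$ supplies the factor $a_{n-1}$ (a spanning tree of $\hat A_{n-1}^i$), and each of the other $d-2$ bags supplies a factor $\bigl(2a_{n-1}+\tfrac1w a'_{n-1}\bigr)$, while the $(d-1)$ counts the choice of which bag is distinguished. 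Summing the two cases ($e\in T$ and $e\notin T$) yields the claimed identity.

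The main obstacle I anticipate is getting the combinatorial bookkeeping exactly right in the $e\notin T$ case — in particular, justifying cleanly that splitting off one ``distinguished'' bag $G_i$ (the one carrying the $(o,0)$--$(o,1)$ path) is a genuine bijection, that the weighted count for the distinguished bag is precisely $a_{n-1}$ and not something involving $a'_{n-1}$, and that the remaining bags each independently contribute the factor $\bigl(2a_{n-1}+\tfrac1w a'_{n-1}\bigr)$ with the $1/w$ corrections from contracting the edge $\{(o_j,0),(o_j,1)\}$ landing correctly. Once the factorization over bags is set up as in Lemma~\ref{recursion1}, the rest is routine.
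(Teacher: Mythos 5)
Your proposal is correct and follows essentially the same decomposition as the paper: split on whether $e\in T$, use the unique distinguished bag carrying the $(o,0)$--$(o,1)$ path to get the factor $(d-1)a_{n-1}$, and reuse the Lemma~\ref{recursion1} count $\bigl(2a_{n-1}+\tfrac1w a'_{n-1}\bigr)$ for each of the remaining $d-2$ bags. One parenthetical claim in your digression is false — the weighted number of spanning trees of $G_i$ avoiding $e$ is $a_{n-1}$, not $\bigl(2a_{n-1}+\tfrac1w a'_{n-1}\bigr)$, since with $e$ forbidden both cut edges are forced — but you correct yourself immediately afterward and the final bookkeeping lands where the paper's does.
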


\begin{figure}[h]
\label{a_n figure}
\centering
\begin{minipage}{0.55\textwidth}
\centering
\begin{tikzpicture}[baseline=0,
    vert/.style={circle, draw, minimum size=7, inner sep=3},
    scale=1.5
    ]
    
    \draw (-1.8, 0.1) node[] {$\hat{A}_2$};
    \draw (-1.2, -0.9) node[] {$\hat{A}^1_1$};
    
    \foreach \x/\y/\n in {-0.3/0/l0x0l,+0.3/0/l0x0r,
        -0.8/-1/l1x0l,-0.2/-1/l1x0r,
        -1.3/-2/l2x0l,-0.7/-2/l2x0r,
        -0.0/-2/l2x1l,+0.6/-2/l2x1r,
        +1.3/-2/l2x2l,+1.9/-2/l2x2r}
    \draw (\x, \y) node[vert] (\n) {};
    
    \draw[dashed, thick] 
        (l0x0l) -- node[above] {$e$} (l0x0r)
        ;
    \draw[thick]
        (l0x0r) -- (l1x0r)
        (l0x0l) -- (l1x0l)
        (l2x0l) -- (l2x0r)
        (l1x0l) -- (l2x0l)
        (l1x0r) -- (l2x0r)
        (l1x0r) -- (l2x2r)
        (l2x2l) -- (l2x2r)
        (l1x0r) -- (l2x1r)
        (l1x0l) -- (l2x1l)
        ;
    \draw[dotted, thick]
        (l1x0l) -- (l1x0r)
        (l2x1l) -- (l2x1r)
        (l1x0l) -- (l2x2l)
        ;
    \draw 
        (1.0, -1) node[] (l1xdl) {$...$}
        (1.6, -1) node[] (l1xdr) {$...$}
        ;
    \draw[dotted, thick]
        (l0x0l) -- (l1xdl)
        ;
    \draw[thick]
        (l0x0r) -- (l1xdr)
        ;
    \draw[dashed, thin]
        (-1.5,-2.0) -- (-0.8, -0.7) -- (-0.3,-0.7) -- (+2.3, -2.0)
        ;
\end{tikzpicture}

\begin{minipage}{\textwidth}
\vspace{1ex}
Both edges from $\{(o,0),(o_1,0)\}$ and 
\newline $\{(o,1),(o_1,1)\}$ are in $T$, but $e$ is not.
\newline $T \cap \hat{A}^1_1$ is a spanning tree in $\hat{A}^1_1$.
\end{minipage}
\end{minipage}
\end{figure}

\begin{proof}
The weighted number of spanning trees that contain $e$ is $a'_{n}$. It is easy to see that $T$ is a spanning tree that does not contain $e$, if and only if $T \cap G_i$ is a spanning tree of $G_i$ not containing $e$ for some $i$, and $T \cap G_j$ is a graph not containing $e$, with $T \cap G_j \cup \{e\}$ is a spanning tree of $G_j$ for all $j \neq i$. We have $d-1$ options to choose $i$, then $\{(o,0),(o_i,0)\}$ and $\{(o,1),(o_i,1)\}$ must be in $T$ and $T \cap\hat  A^i_{n-1}$ is a spanning tree of $\hat  A^i_{n-1}$, so this is $a_{n-1}$ weighted options. For the other $j$'s it is exactly the same as in the proof of Lemma~\ref{recursion1}, so $\left(2a_{n-1} + \frac{1}{w} a'_{n-1}\right)$ weighted possibilities, from which the proof is complete.
\end{proof}

\begin{Lemma}
\begin{equation*} \label{t(T_n)}
    t(\hat T_n) = 2a_na_{n-1}+\frac{1}{w}(a_na'_{n-1}+a'_na_{n-1})
\end{equation*} 
\end{Lemma}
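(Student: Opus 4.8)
The plan is to run the same edge-cut decomposition used for Lemmas~\ref{recursion1} and~\ref{t}, now splitting $\hat T_n$ along the bag over the center $u$. By the alternative description of the perfect tree, deleting one edge at $u$ in $T_n$ leaves an $A_n$ containing $u$ together with one extra copy of $A_{n-1}$; so $T_n$ is an $A_n$ with root $u$, with an additional copy $A_{n-1}^0$ of $A_{n-1}$ (root $o_0$) hung on $u$ via the single edge $\{u,o_0\}$. Passing to the product with $wK_2$, the graph $\hat T_n$ is $\hat A_n$ together with $\hat A_{n-1}^0$ and exactly the two ``cross'' edges $g_0=\{(u,0),(o_0,0)\}$ and $g_1=\{(u,1),(o_0,1)\}$, each of weight $1$. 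Since $g_0,g_1$ are the only edges between $V(\hat A_n)$ and $V(\hat A_{n-1}^0)$, the pair $\{g_0,g_1\}$ is an edge cut, so every spanning tree $T$ of $\hat T_n$ contains at least one of $g_0,g_1$; I would split the count according to whether it contains exactly one or both.

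If $T$ contains exactly one of $g_0,g_1$, that edge is a bridge of $T$; deleting it shows $T\cap\hat A_n$ is a spanning tree of $\hat A_n$ and $T\cap\hat A_{n-1}^0$ is a spanning tree of $\hat A_{n-1}^0$, and conversely any such pair together with the chosen cross edge is a spanning tree of $\hat T_n$. As the cross edges have weight $1$, this case contributes $2a_na_{n-1}$ to the weighted count.

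If $T$ contains both $g_0$ and $g_1$, consider the unique path of $T$ from $(u,0)$ to $(u,1)$: it either lies entirely in $\hat A_n$, or it runs $(u,0)\,g_0\,(o_0,0)\cdots(o_0,1)\,g_1\,(u,1)$ through $\hat A_{n-1}^0$, and not both (otherwise $T$ contains a cycle). In the first sub-case $T\cap\hat A_n$ is a spanning tree of $\hat A_n$, while $T\cap\hat A_{n-1}^0$ is a spanning forest of $\hat A_{n-1}^0$ with exactly two components, one containing $(o_0,0)$ and one containing $(o_0,1)$ (two of them, since joining the two roots inside $\hat A_{n-1}^0$ would close a cycle with $g_0,g_1$ and the $\hat A_n$-path; exactly two, since otherwise $T$ would be disconnected). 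The second sub-case is the mirror image with the roles of $\hat A_n$ and $\hat A_{n-1}^0$ exchanged. The one small computation needed is that adding the root-bag edge of $\hat A_{n-1}^0$ (weight $w$) to such a two-component forest is a weight-preserving bijection onto the spanning trees of $\hat A_{n-1}^0$ containing that edge, so the weighted number of those forests is $\tfrac1w a'_{n-1}$ (and symmetrically $\tfrac1w a'_n$ for $\hat A_n$). Hence this case contributes $a_n\cdot\tfrac1w a'_{n-1}+\tfrac1w a'_n\cdot a_{n-1}$, and summing the two cases gives $t(\hat T_n)=2a_na_{n-1}+\tfrac1w(a_na'_{n-1}+a'_na_{n-1})$.

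The only genuinely delicate point — the step I would be most careful about — is the dichotomy in the ``both cross edges'' case: the correct split is by which of the two halves $\hat A_n$, $\hat A_{n-1}^0$ internally connects its own pair of bag vertices, \emph{not} by which bag edge $T$ contains, since a half may connect its bag vertices through a longer internal path without using its bag edge. Once this is fixed, together with the bijective count of the two-component spanning forests, the rest is routine weight bookkeeping. Optionally one can instead handle this case by contracting $g_0$ and $g_1$ (valid as they have weight $1$) and enumerating spanning trees of the resulting graph — two blocks glued along two vertices, with the two bag edges now parallel — which produces exactly the same two sub-cases.
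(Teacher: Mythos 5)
Your proof is correct and takes essentially the same route as the paper: both decompose $\hat T_n$ as $\hat A_n$ glued to $\hat A_{n-1}$ by the two cross edges, split into the same three cases (one cross edge; both cross edges with the $\hat A_n$ half a spanning tree; both with the $\hat A_{n-1}$ half a spanning tree), and use the same $\tfrac1w$ bijection via the root-bag edge from Lemma~\ref{recursion1}. The only cosmetic difference is the order of the case split — you branch first on how many cross edges lie in $T$, whereas the paper branches first on which of $T\cap\hat A_n$, $T\cap\hat A_{n-1}$ is connected — but the resulting enumeration is identical.
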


\begin{figure}[h]
    \label{T_n figure}
    \centering
    
\begin{minipage}{0.3\textwidth}
\begin{tikzpicture}[baseline=0,
    vert/.style={circle, draw, minimum size=7, inner sep=3},
    scale=1.25
    ]
    \foreach \x/\y/\n in {
        -0.3/0/x0l0l,
        +0.3/0/x0l0r,
        -1.6/-1/x0l1l,
        -1.0/-1/x0l1r,
        -0.3/-1/x1l1l,
        +0.3/-1/x1l1r,
        +1.0/-1/x2l1l,
        +1.6/-1/x2l1r,
        -0.3/1/x0l2l,
        +0.3/1/x0l2r}
    \draw (\x, \y) node[vert] (\n) {};
    
    \foreach \a/\b in {
        x0l0l/x0l0r,
        x1l1l/x1l1r,
        x0l0l/x2l1l,
        x0l0l/x0l2l}
    \draw[dotted, thick] (\a) -- (\b);
    
    \foreach \a/\b in {
        x0l2l/x0l2r,
        x0l1l/x0l1r,
        x0l0l/x0l1l,
        x0l0l/x1l1l,
        x0l0r/x1l1r,
        x0l0r/x0l1r,
        x0l0r/x2l1r,
        x2l1l/x2l1r,
        x0l0r/x0l2r}
    \draw[thick] (\a) -- (\b);
    
    \draw[dashed] (-1.5,0.5) -- (+1.5, 0.5);
    \draw (-1.0, 0.5) node[above] {$\hat{A_0}$};
    \draw (-1.0, 0.5) node[below] {$\hat{A_1}$};
\end{tikzpicture}
\begin{minipage}{12em}
\vspace{1ex}
One edge is in T,
\newline $T \cap \hat{A}_1$ is a tree,
\newline $T \cap \hat{A}_0$ is a tree.
\newline 
\end{minipage}
\end{minipage}
\begin{minipage}{0.3\textwidth}
\begin{tikzpicture}[baseline=0,
    vert/.style={circle, draw, minimum size=7, inner sep=3},
    scale=1.25
    ]
    \foreach \x/\y/\n in {
        -0.3/0/x0l0l,
        +0.3/0/x0l0r,
        -1.6/-1/x0l1l,
        -1.0/-1/x0l1r,
        -0.3/-1/x1l1l,
        +0.3/-1/x1l1r,
        +1.0/-1/x2l1l,
        +1.6/-1/x2l1r,
        -0.3/1/x0l2l,
        +0.3/1/x0l2r}
    \draw (\x, \y) node[vert] (\n) {};
    
    \foreach \a/\b in {
        x0l0l/x0l0r,
        x0l1l/x0l1r,
        x0l0l/x2l1l}
    \draw[dotted, thick] (\a) -- (\b);
    
    \foreach \a/\b in {
        x0l0r/x0l1r,
        x0l0l/x0l1l,
        x1l1l/x1l1r,
        x0l0l/x1l1l,
        x0l0r/x1l1r,
        x0l0r/x2l1r,
        x2l1l/x2l1r,
        x0l0r/x0l2r,
        x0l0l/x0l2l}
    \draw[thick] (\a) -- (\b);
    
    \draw[dashed, thick] (x0l2l) -- node[below] {$e_0$} (x0l2r);
    
    \draw[dashed] (-1.5,0.5) -- (+1.5, 0.5);
    \draw (-1.0, 0.5) node[above] {$\hat{A_0}$};
    \draw (-1.0, 0.5) node[below] {$\hat{A_1}$};
\end{tikzpicture}
\begin{minipage}{12em}
\vspace{1ex}
Both edges are in T,
\newline $T \cap \hat{A}_1$ is a tree,
\newline $T \cap \hat{A}_0 \cup \{e_0\}$ is a tree.
\newline 
\end{minipage}
\end{minipage}
\begin{minipage}{0.3\textwidth}
\begin{tikzpicture}[baseline=0,
    vert/.style={circle, draw, minimum size=7, inner sep=3},
    scale=1.25
    ]
    \foreach \x/\y/\n in {
        -0.3/0/x0l0l,
        +0.3/0/x0l0r,
        -1.6/-1/x0l1l,
        -1.0/-1/x0l1r,
        -0.3/-1/x1l1l,
        +0.3/-1/x1l1r,
        +1.0/-1/x2l1l,
        +1.6/-1/x2l1r,
        -0.3/1/x0l2l,
        +0.3/1/x0l2r}
    \draw (\x, \y) node[vert] (\n) {};
    
    \foreach \a/\b in {
        x0l1l/x0l1r,
        x1l1l/x1l1r,
        x0l0l/x2l1l}
    \draw[dotted, thick] (\a) -- (\b);
    
    \foreach \a/\b in {
        x0l0r/x0l1r,
        x0l0l/x0l1l,
        x0l0l/x1l1l,
        x0l0r/x1l1r,
        x0l0r/x2l1r,
        x2l1l/x2l1r,
        x0l0r/x0l2r,
        x0l0l/x0l2l,
        x0l2l/x0l2r}
    \draw[thick] (\a) -- (\b);
    
    \draw[dashed, thick] (x0l0l) -- node[above] {$e_1$} (x0l0r);
    
    \draw[dashed] (-1.5,0.5) -- (+1.5, 0.5);
    \draw (-1.0, 0.5) node[above] {$\hat{A_0}$};
    \draw (-1.0, 0.5) node[below] {$\hat{A_1}$};
\end{tikzpicture}
\begin{minipage}{12em}
\vspace{1ex}
Both edges are in T,
\newline $T \cap \hat{A}_1 \cup \{e_1\}$ is a tree,
\newline $T \cap \hat{A}_0$ is a tree.
\newline 
\end{minipage}
\end{minipage}
\end{figure}

\begin{proof}
$T_n$ can be constructed by taking an $A_n$ graph and an $A_{n-1}$ graph and connecting their roots. In each spanning tree $T$ of $\hat T_n$ either $T \cap \hat A_n$ or $T \cap \hat A_{n-1}$ is a spanning tree, or both. If both, then we have $2$ options to connect them, so it is $2a_na_{n-1}$ weighted options. If $\hat A_n \cap T$ is a spanning tree, but $\hat A_{n-1} \cap T$ is disconnected, then we have to put both edges between $\hat A_n$ and $\hat A_{n-1}$ into $T$, and as in Lemma \ref{recursion1}, we can think of $\hat A_{n-1} \cap T$ as a spanning tree containing the edge between the $2$ vertices in the bag of the root of $\hat A_{n-1}$, minus this edge, so it is $\frac{1}{w}a_na'_{n-1}$ weighted possibilities. In the same way we get $\frac{1}{w}a'_na_{n-1}$ for the third case. Summing these we get the desired result.    
\end{proof}

Let $t_m(\hat T_n)$ be the number of spanning trees in $\hat T_n$ with the unique path from $(u,0)$ to $(u,1)$ going into exactly $m$ bags. Note that as a bag only contains $2$ vertices, we don't have plenty of options for a path between $(u,0)$ and $(u,1)$. The only way for a path through $m$ bags is that we do $m-1$ moves in tree edges, going into the $m$'th bag, then in the $m$'th step we move within the bag, and then $m-1$ steps back up in tree edges.

\begin{Lemma} \label{m length}
For each pair $n > m \geq 2$ we have
\begin{equation*}
    t_m(\hat T_n)=d(d-1)^{m-2}w\left(2a_{n-1}+\frac{1}{w}a'_{n-1}\right)\left(2a_{n-m}+\frac{1}{w}a'_{n-m}\right) \prod_{i=1}^m \left(2a_{n-i}+\frac{1}{w}a'_{n-i}\right)^{d-2},
\end{equation*}
and in the $m=1$ case, for $n > 1$ the following is true.
$$t_1(\hat T_n)=w\left(2a_{n-1} + \frac{1}{w} a'_{n-1}\right)^d.$$
\end{Lemma}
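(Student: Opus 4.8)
The plan is to treat the two cases separately, each time reducing the count to the bookkeeping already done in the proof of Lemma~\ref{recursion1}. For $m=1$, the point is that the unique path from $(u,0)$ to $(u,1)$ enters exactly one bag if and only if that bag is $\{u\}\gpr wK_2$, i.e.\ if and only if the edge $e_0=\{(u,0),(u,1)\}$ belongs to the spanning tree $T$; so $t_1(\hat T_n)$ equals the weighted number of spanning trees of $\hat T_n$ that contain $e_0$. Deleting $u$ from $T_n$ leaves $d$ copies of $A_{n-1}$, so, writing $G_j$ for the subgraph of $\hat T_n$ spanned by the $j$-th copy of $\hat A_{n-1}$ together with $(u,0)$ and $(u,1)$ (each edge of $\hat T_n$ lies in exactly one $G_j$, except $e_0$, which lies in all of them), a subgraph $T$ is a spanning tree of $\hat T_n$ containing $e_0$ precisely when $T\cap G_j$ is a spanning tree of $G_j$ containing $e_0$ for every $j$. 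The weighted number of spanning trees of such a $G_j$ that contain $e_0$, with the weight of $e_0$ not counted, was shown in the proof of Lemma~\ref{recursion1} to equal $2a_{n-1}+\tfrac1w a'_{n-1}$; since the weight $w$ of $e_0$ is to be counted exactly once, this gives $t_1(\hat T_n)=w\big(2a_{n-1}+\tfrac1w a'_{n-1}\big)^d$.

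For $m\ge 2$, I would start from the description of the path given just before the lemma: it is forced to have the form $(v_0,0)\to(v_1,0)\to\cdots\to(v_{m-1},0)\to(v_{m-1},1)\to\cdots\to(v_1,1)\to(v_0,1)$, where $v_0=u$ and $v_0,v_1,\dots,v_{m-1}$ is a simple path in $T_n$ (so $v_i$ is at distance $i$ from $u$), and it consists of exactly these $2m-1$ edges, the only one of weight different from $1$ being the bag edge $\{(v_{m-1},0),(v_{m-1},1)\}$, of weight $w$. There are $d(d-1)^{m-2}$ such paths in $T_n$ ($d$ choices for $v_1$, then $d-1$ for each further $v_{i+1}$, namely the neighbours of $v_i$ other than $v_{i-1}$), the spanning trees counted by $t_m(\hat T_n)$ are partitioned according to which of these paths they realise, and by the symmetry of $T_n$ each path is realised by the same weighted number $N$ of spanning trees; hence $t_m(\hat T_n)=d(d-1)^{m-2}N$. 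Here a spanning tree $T$ realises a fixed such path $\pi$ exactly when $T$ contains all $2m-1$ edges of $\pi$: if it does, then $\pi$ is automatically the unique $(u,0)$--$(u,1)$ path in $T$, it meets exactly the $m$ bags at $v_0,\dots,v_{m-1}$, and every bag edge at $v_0,\dots,v_{m-2}$ is forced out of $T$ since it would close a cycle with $\pi$.

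It then remains to compute $N$. With the edges of $\pi$ fixed in $T$, the edges of $\hat T_n$ off $\pi$ split over the hanging perfect trees: for each neighbour $o'$ of some $v_i$ that does not lie on $\pi$, consider the subgraph spanned by the hanging copy of $\hat A_k$ it roots (of height $k$) together with $(v_i,0),(v_i,1)$ and the two rungs joining them to $(o',0),(o',1)$. This is exactly a graph of the kind analysed in Lemma~\ref{recursion1}, and the restriction of $T$ to it must be a spanning forest with two components separating $(v_i,0)$ from $(v_i,1)$, since those two vertices are already joined through $\pi$; adding back the bag edge at $v_i$ identifies such forests with the spanning trees of that graph containing the bag edge, so their weighted number, with the bag edge's weight removed, is again $2a_k+\tfrac1w a'_k$, and the choices on distinct hanging trees are independent. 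Counting the hanging trees --- $d-1$ of height $n-1$ at $v_0$, $d-2$ of height $n-i-1$ at each $v_i$ with $1\le i\le m-2$, and $d-1$ of height $n-m$ at $v_{m-1}$ --- and multiplying in the weight $w$ carried by the bag edge of $\pi$, one obtains
\[
N=w\Big(2a_{n-1}+\tfrac1w a'_{n-1}\Big)^{d-1}\Big(2a_{n-m}+\tfrac1w a'_{n-m}\Big)^{d-1}\prod_{i=1}^{m-2}\Big(2a_{n-i-1}+\tfrac1w a'_{n-i-1}\Big)^{d-2},
\]
and reindexing the product and absorbing the two leftover linear factors into it rewrites $d(d-1)^{m-2}N$ as the claimed formula. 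The step I expect to require the most care is the weight bookkeeping --- making sure the weight $w$ on a bag edge is counted exactly once throughout, both in the $\tfrac1w$ of the two-component forest counts and in the single $w$ carried by $\pi$ --- together with checking that the hanging trees really do partition the edges left after removing $\pi$, so that $N$ genuinely factorises; the combinatorial content of those factors is imported wholesale from Lemma~\ref{recursion1}.
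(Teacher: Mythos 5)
Your proposal is correct and follows essentially the same approach as the paper: decompose according to the projected path in $T_n$ (counted by $d(d-1)^{m-2}$), then, for each hanging perfect tree of height $k$ rooted off the path, import the factor $2a_k+\tfrac1w a'_k$ from the analysis of Lemma~\ref{recursion1}, with one $w$ for the single bag edge of the path. The paper's proof is terser — it simply lists the multiset of hanging-tree heights (``$d-1$ pieces of $A_{n-1}$, $d-2$ pieces of $A_{n-2}$, \dots'') — while you spell out why a spanning tree realises a given path $\pi$ iff it contains $\pi$, why the other bag edges along $\pi$ are forced out, and why the restrictions to the hanging trees must be two-component forests; these are the right justifications and fill in the same gaps the paper leaves implicit.
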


\begin{figure}[h]
    \centering
\begin{tikzpicture}[baseline=0,
    vert/.style={circle, draw, minimum size=7, inner sep=3},
    dts/.style={minimum size=5, inner sep=2},
    scale=1.75
    ]
    \foreach \x/\y/\n in {0/0/l0x0,
        -1.5/-1/l1x0,
        0/-1/l1x1,
        +1.5/-1/l1x2,
        +1.0/-2/l2x1,
        -1.0/-2/l2x0,
        -2.0/-3/l3x0,
        -0.8/-3/l3x1,
        +0.6/-3/l3x2,
        +1.8/-3/l3x3}
        \draw 
            (\x-0.25, \y) node[vert] (l\n) {}
            (\x+0.25, \y) node[vert] (r\n) {}
            ;
        
        \draw (-2.3, -2) node[dts] (ll2xd0) {...} ;
        \draw (-1.8, -2) node[dts] (rl2xd0) {...} ;
        \draw (+2.3, -2) node[dts] (rl2xd1) {...} ;
        \draw (+1.8, -2) node[dts] (ll2xd1) {...} ;
        
        \draw[ultra thick]
            (ll0x0) -- (ll1x1) -- (ll2x1) -- (rl2x1) -- (rl1x1) -- (rl0x0)
            ;
        \draw[dotted, thin]
            (ll0x0) -- (rl0x0)
            (ll1x1) -- (rl1x1)
            (ll2x1) -- (rl2x1)
            
            (ll2x0) -- (rl2x0)
            
            (ll0x0) -- (ll1x2)
            (ll2x0) -- (ll3x1)
            (rl2x1) -- (rl3x3)
            (rl1x1) -- (rl2x0)
            
            (ll1x0) -- (ll2xd0)
            ;
        \draw[]
            (ll0x0) -- (ll1x0)
            (rl0x0) -- (rl1x0)
            (rl1x0) -- (rl2xd0)
            
            (rl0x0) -- (rl1x2)
            (ll1x2) -- (rl1x2)
            (ll1x2) -- (ll2xd1)
            (rl1x2) -- (rl2xd1)
            
            (ll1x1) -- (ll2x0)
            (ll2x0) -- (ll3x0)
            (rl2x0) -- (rl3x0)
            (rl2x0) -- (rl3x1)
            (ll3x1) -- (rl3x1)
            (ll3x0) -- (rl3x0)
            
            (ll2x1) -- (ll3x2)
            (rl2x1) -- (rl3x2)
            (ll2x1) -- (ll3x3)
            (ll3x3) -- (rl3x3)
            ;
        \draw[dashed]
            (ll1x0) -- (rl1x0)
            (ll3x2) -- (rl3x2)
            ;
            
        \node[above left=0 and 0 of ll1x0] {$\hat{A}_2$} ;
        \node[above right=0 and 0 of rl1x2] {$\hat{A}_2$} ;
        \node[above=0 and 0 of ll2x0] {$\hat{A}_1$} ;
        \node[above left=0 and 0 of ll3x2] {$\hat{A}_0$} ;
        \node[above right=0 and 0 of rl3x3] {$\hat{A}_0$} ;
\end{tikzpicture}
    \begin{minipage}{0.95\textwidth}
    A possible spanning tree $T$ of $\hat T_3$ with the path highlighted between the vertices in the root bag. $2$ pieces of $\hat{A}_2$, $1$ piece of $\hat{A}_1$, and $2$ pieces $\hat{A}_0$ are hanging from the main path.
    \end{minipage}
\end{figure}

\begin{proof}
The proof of the $m=1$ case is the same as the proof of Lemma~\ref{recursion1}, except here the root has degree $d$, so the exponent is $d$ instead of $d-1$.

For the $m>1$ case there are $d(d-1)^{m-2}$ paths from $(u,0)$ to $(u,1)$ touching $m$ bags, as the first $m-1$ steps determine the path, and this is an arbitrary $m-1$ long path in $T_n$, so for the first step we have $d$ options, and for the rest there are $d-1$ possibilities. We always have exactly $1$ step within a bag in the path, which gives the multiplier $w$ in the equation.

Now assume that we know the path from $(u,0)$ to $(u,1)$.  We want to count the number of spanning trees containing this path.
The projection of this path to $T_n$ is a path of length $m-1$, with $u$ as one of its endpoints. No matter what this path is, there are always $d-1$ pieces of $A_{n-1}$, $d-2$ pieces of $A_{n-2}$, $d-2$ pieces of $A_{n-3}$, ..., $d-2$ pieces of $A_{n-m+1}$ and $d-1$ pieces of $A_{n-m}$ subtrees, that are hanging from the path, i.e. disjoint from the projected path, and with root connected to it. As in the proof of Lemma \ref{recursion1} we have independently $(2a_{n-i}+\frac{1}{w}a'_{n-i})$ weighted possibilities for each $\hat A_{n-i}$ so that the whole subgraph is a spanning tree. Multiplying these we get the number of spanning trees with this path. 
\end{proof}

\subsection{Distribution of the distances in $\FSF_w(\tree^d\gpr K_2)$}
Let $A$ be the infinite tree with degrees $d$, except one vertex, which has degree $d-1$, call this special vertex $o$. Let $e$ be the edge in the bag of $o$ in $\hat A$. Define $c:=\Prob(e \in \FSFc(A \gpr K_2))$. The $\hat A_n$ is an exhaustion of $\hat A$, so by the definition of the $\FSFc$ we have
$$c=\lim_{n \to \infty} \frac{a'_n}{a_n}.$$
Let $c_n=\frac{a'_n}{a_n}.$
\begin{Lemma} \label{s}
The sequence $s_n:=\lim_{n \to \infty} \frac{a_{n-1}^{d-1}}{a_n}$ converges to a number $s$ and
\begin{equation*} 
s=\frac{c}{w\big(2+\frac{c}{w}\big)^{d-1}}.
\end{equation*}
\end{Lemma}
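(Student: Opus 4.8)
\section*{Proof proposal for Lemma~\ref{s}}

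The plan is to derive everything from Lemma~\ref{recursion1} together with the convergence $c_n\to c$ that has just been recorded; Lemma~\ref{t} will not be needed. The first step is to rewrite the bracket appearing in Lemma~\ref{recursion1}. Since $a'_{n-1}=c_{n-1}a_{n-1}$ by the definition of $c_{n-1}$, we have $2a_{n-1}+\tfrac1w a'_{n-1}=a_{n-1}\bigl(2+\tfrac{c_{n-1}}{w}\bigr)$, so Lemma~\ref{recursion1} becomes
\[
a'_n=w\,a_{n-1}^{\,d-1}\Bigl(2+\frac{c_{n-1}}{w}\Bigr)^{d-1}.
\]

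Next I would divide both sides by $a_n$: the left-hand side is $c_n$, while the factor $a_{n-1}^{\,d-1}/a_n$ on the right is exactly $s_n$, which yields the single identity
\[
c_n=w\Bigl(2+\frac{c_{n-1}}{w}\Bigr)^{d-1}s_n,\qquad\text{i.e.}\qquad s_n=\frac{c_n}{\,w\bigl(2+\tfrac{c_{n-1}}{w}\bigr)^{d-1}\,}.
\]
This does all the work at once. Because $c_{n-1}\ge 0$, the denominator is bounded below by $2^{d-1}w>0$ for every $n$, hence stays uniformly away from $0$; and since $c_n\to c$ and $c_{n-1}\to c$, the right-hand side converges. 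Therefore $s_n$ converges, and passing to the limit gives $s=\dfrac{c}{w(2+c/w)^{d-1}}$, as claimed. (If one also wants to use Lemma~\ref{t}, the same substitution gives $\tfrac{a_n}{a_{n-1}^{d-1}}=w(2+\tfrac{c_{n-1}}{w})^{d-1}+(d-1)(2+\tfrac{c_{n-1}}{w})^{d-2}$, which likewise exhibits $1/s_n$ as a continuous, uniformly positive function of $c_{n-1}$ and hence convergent; but this is not necessary.)

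There is no genuine difficulty here; the only point to be careful about is not to try to re-derive the convergence of $\{c_n\}$ from scratch out of the recursion, but instead to invoke that $c=\lim_n a'_n/a_n$ simply because $\hat A_n$ exhausts $\hat A$ and $\UST$ converges weakly to $\FSFw$. Granting that, the remaining content is the short algebraic manipulation above plus the elementary observation that $w(2+c_{n-1}/w)^{d-1}$ is bounded away from zero, so that the quotient $s_n=c_n/\bigl(w(2+c_{n-1}/w)^{d-1}\bigr)$ inherits convergence from $c_n$. As a consistency check, eliminating $s_n$ between the displayed identity and the Lemma~\ref{t} version recovers the familiar recursion $c_n=1/(1+\tfrac{d-1}{2w+c_{n-1}})$, so the formula for $s$ is compatible with the value of $c$.
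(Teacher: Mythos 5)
Your proof is correct and follows essentially the same route as the paper: both start from Lemma~\ref{recursion1}, substitute $a'_{n-1}=c_{n-1}a_{n-1}$ to write $c_n a_n = w\,a_{n-1}^{d-1}\bigl(2+\tfrac{c_{n-1}}{w}\bigr)^{d-1}$, and rearrange to express $s_n$ as a continuous function of $c_n$ and $c_{n-1}$ before passing to the limit. The only addition you make is the explicit (and harmless) observation that the denominator $w\bigl(2+\tfrac{c_{n-1}}{w}\bigr)^{d-1}$ is bounded below by $2^{d-1}w>0$, which the paper leaves implicit.
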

\begin{proof}
From Lemma~\ref{recursion1} we have  
$$  c_na_{n} = w\left(2a_{n-1} + \frac{1}{w} a'_{n-1}\right)^{d-1}=wa_{n-1}^{d-1}\Big(2+\frac{c_{n-1}}{w}\Big)^{d-1}.$$ 
After rearranging and letting $n \to \infty$, 
$$\frac{c}{w\big(2+\frac{c}{w}\big)^{d-1}}=\lim_{n\to\infty} \frac{c_n}{w\left( 2+\frac{c_{n-1}}{w} \right)^{d-1}}=\lim_{n\to\infty} \frac{a_{n-1}^{d-1}}{a_n}=s.$$
\end{proof}

From Lemma~\ref{t},
\begin{equation*} \label{eqt_n}
   a_{n} = a'_{n} + (d-1)a_{n-1}\left(2a_{n-1} + \frac{1}{w} a'_{n-1}\right)^{d-2}=c_na_n+(d-1)a_ns_n\left(2+\frac {c_{n-1}}w\right)^{d-2}.
\end{equation*}
Dividing this by $a_n$, taking $n \rightarrow \infty$ and substituting the identity from Lemma~\ref{s}. we get
$$1=c+\frac{(d-1)c\big(2+\frac{c}{w}\big)^{d-2}}{w\big(2+\frac{c}{w}\big)^{d-1}}=c+\frac{c(d-1)}{2w+c}.$$
After rearranging we get a quadratic equation of $c$:
\begin{equation} \label{quadratic}
c^2+c(2w+d-2)-2w=0   
\end{equation}

The constant term is negative, so we have two real roots, a negative and a positive, and $c>0$ so we get the following Theorem.

\begin{Th} \label{c}
$$\Prob(e \in \text{$\FSFc$}(A \gpr K_2))=c=\frac{2-d-2w+\sqrt{(2w+d-2)^2+8w}}{2}$$
\end{Th}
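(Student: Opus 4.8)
The plan is to convert the recursions of Lemmas \ref{recursion1} and \ref{t} into a single algebraic equation for the limiting ratio $c=\lim_{n\to\infty}c_n$ with $c_n=a'_n/a_n$, and then read off the formula by solving a quadratic. First I would note that $c$ is well defined and equals the asserted probability: since $\hat A_n$ exhausts $\hat A$ and $c_n=a'_n/a_n=\Prob(e\in\UST(\hat A_n))$, the weak convergence of $\UST(\hat A_n)$ to $\FSFc$ forces $c_n\to\Prob(e\in\FSFc(A\gpr K_2))$. (One can also see convergence by hand: Lemmas \ref{recursion1} and \ref{t} give $c_n=1/\big(1+\tfrac{d-1}{2w+c_{n-1}}\big)$, an increasing self-map of $[0,1]$, so $\{c_n\}$ is monotone and bounded.)

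Next I would bring in the auxiliary ratios $s_n=a_{n-1}^{d-1}/a_n$. By Lemma \ref{s} these converge to $s=c/\big(w(2+\tfrac{c}{w})^{d-1}\big)$, which is obtained simply by writing Lemma \ref{recursion1} as $c_na_n=wa_{n-1}^{d-1}(2+\tfrac{c_{n-1}}{w})^{d-1}$ and dividing by $a_n$. Then I would take the identity of Lemma \ref{t}, divide through by $a_n$ to get $1=c_n+(d-1)s_n\big(2+\tfrac{c_{n-1}}{w}\big)^{d-2}$, let $n\to\infty$, and substitute the value of $s$; the powers of $\big(2+\tfrac{c}{w}\big)$ almost entirely cancel, leaving $1=c+\dfrac{(d-1)c}{2w+c}$. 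Clearing denominators yields the quadratic $c^{2}+c(2w+d-2)-2w=0$ of equation \eqref{quadratic}.

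Finally, since the constant term $-2w$ is negative the two real roots have opposite signs, and because $c$ is a probability it must be the positive root, $c=\tfrac12\big(2-d-2w+\sqrt{(2w+d-2)^2+8w}\big)$, which is exactly the claimed expression. The only genuine subtlety is justifying the limit interchanges — the existence of $\lim s_n$ and the passage to the limit inside the $(d-2)$-th power — but this is precisely what Lemma \ref{s} together with the convergence of $\{c_n\}$ provides, so no new idea is needed and the remainder is bookkeeping.
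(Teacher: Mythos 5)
Your proposal follows exactly the paper's route: identify $c=\lim a'_n/a_n$ with the FSF-probability via exhaustion, use Lemma~\ref{s} for $s=c/\bigl(w(2+\tfrac{c}{w})^{d-1}\bigr)$, divide Lemma~\ref{t} by $a_n$, pass to the limit, substitute $s$ to obtain the quadratic~\eqref{quadratic}, and select the positive root because the constant term is negative. This is correct and coincides with the paper's argument (your added monotonicity check on $c_n$ is a harmless extra).
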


\begin{proof}
From (\ref{quadratic}) with quadratic formula.
\end{proof}

\begin{Remark}
We can also get a formula for $s$ if we substitute the equation from Theorem~\ref{c} to the equation in Lemma~\ref{s}.
\end{Remark}


Let $q_m:=\lim_{n\to\infty}\frac{t_m(\hat T_n)}{t(\hat T_n)}$. This number has another meaning. This is the probability that for a $u\in\tree^d$, $(u,0)$ and $(u,1)$ belong to the same component of $\FSFc$ in $\tree^d \gpr K_2$, and their distance in the tree is $2m-1$, in other words, the path between them uses $m$ bags.
\begin{Lemma}\label{distance_distr}
For any integer $m\geq 2$, $q_m=K\left(\frac{(d-1)c}{2w+c}\right)^m$, where $K=\frac{d(2w+c)^2}{(2w+2c)(d-1)^2}$ is a constant that does not depend on $m$.
\end{Lemma}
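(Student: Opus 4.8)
The plan is to compute $q_m$ directly from the explicit formula for $t_m(\hat T_n)$ in Lemma~\ref{m length} together with the formula for $t(\hat T_n)$, by dividing and taking $n\to\infty$, using the limits $c_n\to c$ and $s_n\to s$ established above. First I would rewrite the product $\prod_{i=1}^m\left(2a_{n-i}+\tfrac1w a'_{n-i}\right)^{d-2}$ in Lemma~\ref{m length} by pulling out a factor $a_{n-i}$ from each term, turning the $i$-th factor into $a_{n-i}^{d-2}\left(2+\tfrac{c_{n-i}}{w}\right)^{d-2}$; similarly the two stray factors $\left(2a_{n-1}+\tfrac1w a'_{n-1}\right)$ and $\left(2a_{n-m}+\tfrac1w a'_{n-m}\right)$ become $a_{n-1}\left(2+\tfrac{c_{n-1}}{w}\right)$ and $a_{n-m}\left(2+\tfrac{c_{n-m}}{w}\right)$. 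So $t_m(\hat T_n)$ is a constant times $w\,(d-1)^{m-2}d$ times a product of powers of the $a_{n-i}$'s and a product of $\left(2+\tfrac{c_{n-i}}{w}\right)$-type factors.

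The next step is to reduce all the powers of $a_{n-i}$ to powers of a single $a_n$ using the relation $a_{n-1}^{d-1}/a_n = s_n$, i.e. $a_{n-1}^{d-1} \approx s\, a_n$ as $n\to\infty$. Iterating, $a_{n-i}$ carries a definite exponent, and one checks that the total homogeneity in $a$ of the expression for $t_m(\hat T_n)$ matches that of $t(\hat T_n)=2a_na_{n-1}+\tfrac1w(a_na'_{n-1}+a'_na_{n-1}) = a_n a_{n-1}\left(2+\tfrac{c_{n-1}}{w}+\tfrac{c_n}{w}\right)$, so that in the ratio $t_m(\hat T_n)/t(\hat T_n)$ every power of $a_n$ cancels and only finitely many $s$-factors and $\left(2+\tfrac{c}{w}\right)$-factors survive in the limit. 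Collecting the factors that genuinely depend on $m$, each increment of $m$ multiplies by one more factor of $(d-1)$, one more factor $\left(2+\tfrac{c}{w}\right)^{d-2}\cdot s = \left(2+\tfrac cw\right)^{d-2}\cdot\dfrac{c}{w(2+\frac cw)^{d-1}} = \dfrac{c}{w(2+\frac cw)} = \dfrac{c}{2w+c}$ (using Lemma~\ref{s}), and the shifting of the two endpoint factors produces no net $m$-dependence. Hence $q_m = K\left(\dfrac{(d-1)c}{2w+c}\right)^m$ for a constant $K$, and reading off the $m$-independent leftover — the $d$ from the root, the $w/(d-1)^2$ from the prefactor, and the endpoint corrections — gives $K = \dfrac{d(2w+c)^2}{(2w+2c)(d-1)^2}$.

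To make this rigorous I would treat the whole computation at the level of the finite $n$, forming the ratio $t_m(\hat T_n)/t(\hat T_n)$ first, substituting $c_{n-i}$ and $s_{n-i}$, verifying the exact cancellation of the $a$-powers for finite $n$, and only then letting $n\to\infty$ so that each $c_{n-i}\to c$ and $s_{n-i}\to s$; this avoids ever having to make sense of $a_n\to\infty$ by itself. A clean bookkeeping device is to define the normalized quantity $\rho_n := 2 + \tfrac{c_n}{w}$ and rewrite Lemma~\ref{m length}, Lemma~\ref{t}, Lemma~\ref{recursion1}, and the formula for $t(\hat T_n)$ entirely in terms of $a_n$, $\rho_n$, and $s_n$; then the ratio collapses almost mechanically.

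The main obstacle is purely combinatorial-algebraic bookkeeping: correctly tracking the exponents of the various $a_{n-i}$ through the substitution $a_{n-i-1}^{d-1}=s_{n-i}a_{n-i}$ and confirming that they cancel exactly against the homogeneity of $t(\hat T_n)$, so that nothing diverges and exactly the claimed constant $K$ is left. There is no probabilistic or analytic subtlety beyond the already-established convergence of $c_n$ and $s_n$; the only care needed is that the endpoint factors in Lemma~\ref{m length} (the isolated $\left(2a_{n-1}+\tfrac1w a'_{n-1}\right)$ and $\left(2a_{n-m}+\tfrac1w a'_{n-m}\right)$) be handled separately from the product, since they are responsible for the constant $K$ and not for the geometric rate.
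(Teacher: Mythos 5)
Your proposal follows the same route as the paper's proof: write $q_m$ as $\lim\frac{a_na_{n-1}}{t(\hat T_n)}\cdot\lim\frac{t_m(\hat T_n)}{a_na_{n-1}}$, factor out $a_{n-i}$ from each $\bigl(2a_{n-i}+\tfrac1w a'_{n-i}\bigr)$, use $a_{n-i}^{d-1}\sim s\,a_{n-i+1}$ to collapse the product of $a$-powers down to $a_na_{n-1}s^m$, and finally substitute Lemma~\ref{s} to simplify $(d-1)\bigl(2+\tfrac cw\bigr)^{d-2}s$ to $\tfrac{(d-1)c}{2w+c}$. Your remark about forming the ratio at finite $n$ before passing to the limit is a fine stylistic precaution, but it is equivalent to the paper's use of the asymptotic-equivalence relation $\sim$, so the two arguments are essentially identical.
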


\begin{proof}
We call two positive sequences ($u_n,\ v_n$) equivalent ($u_n\sim v_n$) if $\lim_{n\to\infty} u_n/v_n=1$.

We are going to prove, that $t_m(\hat T_n)$ and $t(\hat T_n)$ are asymptotically the same as a constant times $a_na_{n-1}$. 
$$\lim_{n\to\infty}  \frac{t_m(\hat T_n)}{t(\hat T_n)}=\lim_{n\to\infty}\frac{a_na_{n-1}}{t(\hat T_n)}\lim_{n\to\infty}\frac{t_m(\hat T_n)}{a_na_{n-1}}.$$
Recall the following constants: $$\lim_{n \to \infty} \frac{a'_n}{a_n}=c \ \ \text{and} \ \ \lim_{n \to \infty} \frac{a_{n-1}^{d-1}}{a_n}=s.$$
Using these, and Lemma~\ref{t(T_n)},
$$ \lim_{n\to\infty}\frac{a_na_{n-1}}{t(\hat T_n)}=\lim_{n\to\infty}\frac{a_na_{n-1}}{2a_na_{n-1}+\frac{1}{w}(a_na'_{n-1}+a'_na_{n-1})}=\lim_{n\to\infty}\frac{1}{2+\frac{1}{w}(\frac{a'_{n-1}}{a_{n-1}}+\frac{a'_n}{a_n})}=\frac{1}{2+\frac{2c}{w}}.$$
From Lemma~\ref{m length},
\begin{align*}
&\lim_{n\to\infty}\frac{t_m(\hat T_n)}{a_na_{n-1}}=\\ &\lim_{n\to\infty}\frac{d(d-1)^{m-2}w\Big(2a_{n-1}+\frac{1}{w}a'_{n-1}\Big)\Big(2a_{n-m}+\frac{1}{w}a'_{n-m}\Big)\prod_{i=1}^m\Big(2a_{n-i}+\frac{1}{w}a'_{n-i}\Big)^{d-2}}{a_na_{n-1}}.
\end{align*}
Here $\big(2a_{n-i}+\frac{1}{w}a'_{n-i}\big)\sim a_{n-i} \big(2+\frac cw \big)$. Hence $$t_m(\hat T_n)\sim wd(d-1)^{m-2}a_{n-1}\left(\prod_{i=1}^ma_{n-i}^{d-2}\right)a_{n-m} \Big( 2+\frac cw \Big)^{(d-2)m+2}.$$ 
Using the fact, that $a^{d-1}_{n-i}\sim sa_{n-i+1}$, one can prove easily by induction, that $a_{n-1}\left(\prod_{i=1}^ma_{n-i}^{d-2}\right)a_{n-m}\sim a_{n-1}a_ns^{m}$. Thus, combining the two calculations, $$\frac{t_m(\hat T_n)}{a_na_{n-1}}\sim wd(d-1)^{m-2}\Big(2+\frac cw\Big)^{(d-2)m+2}s^ {m}.$$
Therefore 
$$\lim_{n\to\infty}  \frac{t_m(\hat T_n)}{t(\hat T_n)}=K(d-1)^m\Big(2+\frac cw\Big)^{(d-2)m}s^{m},$$ 
where 
$$K=\frac{1}{2+\frac{2c}{w}} \cdot wd\frac{1}{(d-1)^2}\left(2+\frac{c}{w}\right)^2=\frac{d(2w+c)^2}{(2w+2c)(d-1)^2}.$$
It means, that $q_m=K\left((d-1)s(2+\frac cw)^{d-2}\right)^m.$ Using Lemma~\ref{s}, we know, that $s=\frac{c}{w\left(2+\frac{c}{w}\right)^{d-1}}$, thus $q_m=K\left(\frac{(d-1)c}{2w+c} \right)^m.$
\end{proof}

\begin{Lemma} \label{q_1}
$$\Prob(\{(u,0),(u,1)\} \in \FSFc(\tree^d \gpr K_2))=q_1=\frac{(2w+c)c}{2w+2c}.$$
\end{Lemma}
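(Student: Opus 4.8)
The plan is to compute the limit $q_1=\lim_{n\to\infty} t_1(\hat T_n)/t(\hat T_n)$ directly, in exactly the same style as the proof of Lemma~\ref{distance_distr}, the only difference being that the root of $\hat T_n$ has degree $d$ rather than $d-1$. First I would substitute the two formulas already at hand: the $m=1$ case of Lemma~\ref{m length}, namely $t_1(\hat T_n)=w\left(2a_{n-1}+\frac1w a'_{n-1}\right)^d$, together with the identity $t(\hat T_n)=2a_na_{n-1}+\frac1w\left(a_na'_{n-1}+a'_na_{n-1}\right)$.

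Then I would pass to asymptotics with the constants $c=\lim_{n\to\infty} a'_n/a_n$ and $s=\lim_{n\to\infty} a_{n-1}^{d-1}/a_n$. Since $2a_{n-1}+\frac1w a'_{n-1}\sim a_{n-1}\left(2+\frac cw\right)$, the numerator is $\sim w\,a_{n-1}^{d}\left(2+\frac cw\right)^{d}$, while dividing the denominator by $a_na_{n-1}$ and using $a'_n/a_n\to c$ gives $t(\hat T_n)\sim a_na_{n-1}\left(2+\frac{2c}{w}\right)$. Hence
$$q_1=\lim_{n\to\infty}\frac{w\,a_{n-1}^{d}\left(2+\frac cw\right)^{d}}{a_na_{n-1}\left(2+\frac{2c}{w}\right)}=\left(\lim_{n\to\infty}\frac{a_{n-1}^{d-1}}{a_n}\right)\frac{w\left(2+\frac cw\right)^{d}}{2+\frac{2c}{w}}=\frac{w\,s\left(2+\frac cw\right)^{d}}{2+\frac{2c}{w}}.$$
Finally, substituting $s=\dfrac{c}{w\left(2+\frac cw\right)^{d-1}}$ from Lemma~\ref{s} collapses $w\,s\left(2+\frac cw\right)^{d}$ to $c\left(2+\frac cw\right)$, and clearing the common factor $\tfrac1w$ from numerator and denominator yields $q_1=\dfrac{(2w+c)c}{2w+2c}$, as claimed. (The probabilistic reading is that $q_1$ is the probability that the path in $\FSFc$ between $(u,0)$ and $(u,1)$ uses exactly one bag, which happens precisely when the edge $\{(u,0),(u,1)\}$ itself lies in $\FSFc$.)

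I do not expect any genuine obstacle: every ingredient — the recursions of Lemmas~\ref{recursion1} and \ref{t}, the asymptotic bookkeeping, and the value of $s$ — is already established in the excerpt, so the proof is a short specialization of the $m\ge 2$ computation. The only point requiring a little care is tracking the exponent $d$ (coming from the degree of the root $u$) against the exponent $d-1$ appearing in the definition of $s$, which is exactly what makes the $\left(2+\frac cw\right)$ powers telescope cleanly; and, as in the proof of Lemma~\ref{distance_distr}, one should note in passing that the relevant limits exist so that the product of limits may be split.
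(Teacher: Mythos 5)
Your proposal is correct and matches the paper's proof essentially line for line: both specialize the $m=1$ case of Lemma~\ref{m length}, divide through by $a_n a_{n-1}$, reuse the already-computed limit $a_n a_{n-1}/t(\hat T_n)\to 1/\bigl(2+\tfrac{2c}{w}\bigr)$, and substitute the value of $s$ from Lemma~\ref{s} to simplify.
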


\begin{proof}
From the $m=1$ case of Lemma~\ref{m length}, we have
$$\lim_{n\to\infty} \frac{t_1(\hat T_n)}{a_na_{n-1}}=\lim_{n\to\infty} \frac{w\left(2a_{n-1} + \frac{1}{w} a'_{n-1}\right)^d}{a_na_{n-1}}=\lim_{n\to\infty} \frac{ws_na_na_{n-1}\left( 2+\frac{c_{n-1}}{w} \right)^d}{a_na_{n-1}}=ws\left( 2+\frac{c}{w} \right)^d.$$
In the proof of Lemma~\ref{distance_distr}, we calculated
$$ \lim_{n\to\infty}\frac{a_na_{n-1}}{t(\hat T_n)}=\frac{1}{2+\frac{2c}{w}}.$$
Combining these and using Lemma~\ref{s}, we have
$$q_1=\lim_{n\to\infty}  \frac{t_1(\hat T_n)}{t(\hat T_n)}=\lim_{n\to\infty}\frac{a_na_{n-1}}{t(\hat T_n)}\lim_{n\to\infty}\frac{t_1(\hat T_n)}{a_na_{n-1}}=\frac{1}{2+\frac{2c}{w}} \cdot c \left( 2+\frac{c}{w} \right)=\frac{(2w+c)c}{2w+2c}.$$
\end{proof}

Lemma~\ref{distance_distr} gives us that the distance of two vertices in the same bag in the $\FSFc$ has a geometric distribution. 

\begin{Lemma} \label{sumq}
Let $u \in \tree^d$. Then $(u,0)$ and $(u,1)$ are in the same component of the $\FSFc$ of $\tree^d \gpr K_2$ with probability $1$.
\end{Lemma}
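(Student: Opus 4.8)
The plan is to obtain $\Prob\big((u,0),(u,1)\text{ in one component of }\FSFc(\tree^d\gpr K_2)\big)$ by summing the bag–distance distribution computed above. Since $\FSFc$ is a forest and $\tree^d\gpr K_2$ is connected, whenever $(u,0)$ and $(u,1)$ lie in the same component, that component is a tree and the unique path between them inside it is finite. As observed before Lemma~\ref{m length}, any simple path from $(u,0)$ to $(u,1)$ uses exactly $m$ bags for some integer $m\ge 1$: it climbs a length-$(m-1)$ simple path of $\tree^d$ in the $0$-sheet, crosses a single bag edge, and climbs back down in the $1$-sheet. Hence the event $\{(u,0)\leftrightarrow(u,1)\}$ is the disjoint union over $m\ge1$ of the events ``$(u,0),(u,1)$ are in the same component and the connecting path uses exactly $m$ bags'', whose probabilities are the $q_m$. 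By countable additivity,
$$\Prob\big(\{(u,0)\leftrightarrow(u,1)\}\text{ in }\FSFc(\tree^d\gpr K_2)\big)=\sum_{m=1}^{\infty}q_m=q_1+\sum_{m=2}^{\infty}K\Big(\tfrac{(d-1)c}{2w+c}\Big)^m,$$
invoking Lemma~\ref{q_1} for $q_1$ and Lemma~\ref{distance_distr} for the terms with $m\ge2$.

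Next I would verify that the common ratio $r:=\frac{(d-1)c}{2w+c}$ is $<1$, so the geometric tail converges. The quadratic \eqref{quadratic} rearranges to $c(c+2w+d-2)=2w$, so $(d-2)c=\frac{2w(d-2)}{c+2w+d-2}<2w$ because $c+2w>0$; that is, $(d-1)c<2w+c$, i.e.\ $r<1$. Therefore $\sum_{m\ge2}Kr^m=\frac{Kr^2}{1-r}$.

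It then remains to do the bookkeeping. With $K=\frac{d(2w+c)^2}{(2w+2c)(d-1)^2}$ one computes $Kr^2=\frac{dc^2}{2w+2c}$ and $1-r=\frac{2w-(d-2)c}{2w+c}$; moreover $2w-(d-2)c=c^2+2wc=c(2w+c)$ by \eqref{quadratic} again, so $\frac{Kr^2}{1-r}=\frac{dc}{2w+2c}$. Adding $q_1=\frac{(2w+c)c}{2w+2c}$ gives $\frac{c(2w+c+d)}{2w+2c}$, and this equals $1$ precisely when $c(2w+c+d-2)=2w$, which is exactly \eqref{quadratic}. Hence the total probability is $1$, as claimed. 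There is no substantial obstacle here; the only step worth stating carefully is the identification of ``lying in one component'' with the disjoint union of the finite-path events, which is valid because the components are trees and any two of their vertices are joined by a finite path.
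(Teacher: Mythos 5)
Your proof is correct and follows essentially the same route as the paper: identify $\Prob\big((u,0)\leftrightarrow(u,1)\big)$ with $\sum_{m\geq 1}q_m$, then use the quadratic \eqref{quadratic} to evaluate the geometric series and show it equals $1$. The algebra is just bookkept slightly differently (you compute $1-r$ and $Kr^2$ directly, while the paper rewrites $K$ as $\frac{cd}{2w+2c}\cdot\frac{1-r}{r^2}$); both reduce to the same identity $c(2w+c+d-2)=2w$. Your explicit check that $r<1$, and the remark on why the connectivity event is the disjoint union of the finite-path events, are worthwhile elaborations that the paper leaves implicit, but they do not change the underlying argument. (A slightly slicker route to $r<1$: your own computation shows $1-r=c$, and $c\in(0,1)$ by Theorem~\ref{c}.)
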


\begin{proof}
We want to prove that the path between $(u,0)$ and $(u,1)$ is almost surely finite, so $\sum_{m=1}^{\infty} q_m=1$.
From (\ref{quadratic}) we have $2wc+c^2=2w+2c-cd$, hence
$$\frac{(2w+c)c}{2w+2c}=\frac{2w+2c-cd}{2w+2c}=1-\frac{cd}{2w+2c},$$
and (\ref{quadratic}) can also be transformed to $2w+c=\frac{2w+c-(d-1)c}{c}$, thus
$$K=\frac{d(2w+c)^2}{(2w+2c)(d-1)^2}=\frac{d((2w+c)^2-(d-1)c(2w+c))}{(2w+2c)(d-1)^2c}=\frac{cd}{2w+2c}\cdot \frac{1-\frac{(d-1)c}{2w+c}}{\left(\frac{(d-1)c}{2w+c}\right)^2}.$$
Combining these with Lemma~\ref{distance_distr} and Lemma~\ref{q_1}, we have
$$\sum_{m=1}^{\infty} q_m=\frac{(2w+c)c}{2w+2c}+K\sum_{m=2}^{\infty} \left(\frac{(d-1)c}{2w+c}\right)^m=1-\frac{cd}{2w+2c}+K\cdot\frac{\left(\frac{(d-1)c}{2w+c}\right)^2}{1-\frac{(d-1)c}{2w+c}}=1.$$
\end{proof}

\begin{Lemma}\label{elegbag}
Let $H$ be an arbitrary finite connected graph and consider the weighted graph $G=\tree^d\gpr wH$. If any pair $a,b$ of vertices in the same bag belong to the same component of $\FSFc$ almost surely, then the $\FSFc$ is almost surely connected.
\end{Lemma}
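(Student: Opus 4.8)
The plan is to split the statement into two ingredients. First, the hypothesis, applied to each of the (finitely many) pairs of vertices inside a fixed bag and then to each of the countably many bags $B_v$ (the copy of $wH$ over the vertex $v\in\tree^d$), shows that almost surely \emph{every} bag is entirely contained in a single component of $\FSFc(G)$. Second --- and this is the only real content --- I claim that for every edge $\{v,v'\}$ of $\tree^d$, almost surely at least one of the $|V(H)|$ edges $e_y:=\{(v,y),(v',y)\}$, $y\in V(H)$, lies in $\FSFc(G)$. Granting both and intersecting the (countably many) corresponding almost sure events, fix a realization in which every bag is connected in $\FSFc(G)$ and every pair of neighbouring bags $B_v,B_{v'}$ is joined by an edge of $\FSFc(G)$; since $\tree^d$ is connected, all bags --- hence all of $V(G)$ --- then lie in a single $\FSFc(G)$-component, which is what we want.

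To prove the claim, first I would note that the $e_y$ are exactly the edges of $G$ with one endpoint in $B_v$ and the other in $B_{v'}$, and that they form an edge cut of $G$: deleting the edge $\{v,v'\}$ disconnects the tree $\tree^d$, hence deleting all the $e_y$ disconnects $G=\tree^d\gpr wH$ into $C\gpr wH$ and $C'\gpr wH$, where $C,C'$ are the two components of $\tree^d-\{v,v'\}$. Next, take the exhaustion $G_n:=T_n\gpr wH$ of $G$; for every $n$ large enough that $v,v'\in T_n$, the edge $\{v,v'\}$ is still a bridge of $T_n$, so the same $|V(H)|$ edges $e_y$ form an edge cut of the finite connected graph $G_n$. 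Since $\UST(G_n)$ is a spanning tree of $G_n$, it is connected and so must contain an edge from every edge cut; in particular $\Prob\big(\,e_y\notin\UST(G_n)\ \text{for all }y\in V(H)\,\big)=0$.

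Finally I would pass to the limit. The event $\{\,e_y\notin F\ \text{for all }y\in V(H)\,\}$ is exactly a cylinder event of the form whose probability is required to converge (to the $\FSF$-probability) in the definition of the Free Uniform Spanning Forest, so
\[
\Prob\big(\,e_y\notin\FSFc(G)\ \text{for all }y\,\big)=\lim_{n\to\infty}\Prob\big(\,e_y\notin\UST(G_n)\ \text{for all }y\,\big)=0,
\]
establishing the claim. The only point requiring a moment's care is this last step: the event ``some $e_y$ is present'' has to be written as the complement of ``all the $e_y$ are absent'', which is one of the finite cylinder events for which weak convergence of $\UST(G_n)$ to the $\FSF$ gives convergence of probabilities; beyond that, the argument is just bookkeeping over countably many almost sure events.
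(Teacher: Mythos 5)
Your argument is essentially identical to the paper's: show each bag is internally connected by the hypothesis, show adjacent bags are joined by an $\FSFc$-edge because the edges between them form a cut (hence must be hit by every $\UST(G_n)$, and by cylinder-event convergence, almost surely by $\FSFc$), and conclude by a countable intersection. You simply spell out the cut/bridge justification that the paper leaves implicit.
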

\begin{proof}
Take two adjacent bags. Let the set of edges between them called $E'$. The event that at least one edge of $E'$ is in $\FSFc$ is a cylinder event, for every graph $G_n$ of an exhausting finite sequence for $G$ this event has probability one to hold for the $\UST$. Hence in the $\FSFc$ there is an edge from $E'$ with probability one. Thus there are always two connected vertices in the two adjacent bags. By assumption, all vertices within a bag are in the same component, therefore all vertices in these two adjacent bags are in the same component. This is true for any two adjacent bags, thus for all edges in $\tree^d$. Using countable intersection, we conclude that the $\FSFc$ of that graph is connected with probability one. 
\end{proof}

Now we have everything, to prove the main result of this section:

\begin{proof}[Proof of Theorem~\ref{always1}.]
From Lemma~\ref{sumq}. and Lemma~\ref{elegbag}. the statement follows.
\end{proof}
\begin{Remark}
It is a natural question to ask whether this method can be generalized to other graphs instead of $K_2$. Unfortunately we strongly relied on the fact, that in $\hat T_n$ a path between $(u,0)$ and $(u,1)$ looks quite nice, while if we change $K_2$ to some larger graph then plenty of other options arise which we cannot handle with this enumerative method. 
\end{Remark}


\section{The general case, large and small weights}

In this section we are going to prove Theorem \ref{large_small}. The first part will follow from the next theorem.

\begin{Th}\label{ossztetel}
Given an arbitrary $d>2$ and a finite, regular, connected graph $H$, there is a $W>0$ such that the $\FSFc$ of the graph $G=\tree^d \gpr H$ is almost surely connected for all $w>W$. 
\end{Th}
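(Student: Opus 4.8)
\medskip
\noindent\textit{Proof plan.} The plan is to invoke Lemma~\ref{elegbag}: it is enough to show that, once $w$ is large, any two vertices $a,b$ lying in a common bag $B_v:=\{v\}\gpr wH$ of $G=\tree^d\gpr wH$ are in the same component of $\FSFc$ almost surely. I would obtain this from a bound on the distance of $a$ and $b$ in $\UST(G_n)$ that is uniform in the exhaustion $G_n=T_n\gpr wH$. Indeed, for fixed $k$ the event that $a$ and $b$ are at forest-distance at most $k$ depends on only finitely many edges, so the weak convergence $\UST(G_n)\to\FSFc$ gives $\Prob_{\FSFc}(a\leftrightarrow b)=\lim_{k\to\infty}\lim_{n\to\infty}\Prob_{\UST(G_n)}(\mathrm{dist}_{\UST(G_n)}(a,b)\le k)$, and by Markov's inequality the right-hand side equals $1$ as soon as $\sup_n\mathbb{E}_{\UST(G_n)}[\mathrm{dist}_{\UST(G_n)}(a,b)]<\infty$. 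If $S$ denotes the random set of bags met by the $a$--$b$ path in $\UST(G_n)$, then, the path being simple and the bags of $S$ spanning a subtree of $\tree^d$, it uses at most $|H|-1$ edges inside each bag of $S$ and at most $|H|$ edges across each of the $|S|-1$ tree-edges joining them, so $\mathrm{dist}_{\UST(G_n)}(a,b)\le 2|H|\,|S|$. Thus it suffices to bound $\mathbb{E}_{\UST(G_n)}[\,|S|\,]=\sum_{u\in T_n}\Prob\big(B_u\cap(\text{path})\ne\emptyset\big)$ uniformly in $n$.

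The key point is that these bag-visiting probabilities decay geometrically in $\mathrm{dist}_{\tree^d}(v,u)$. For a tree-edge $f=\{w_0,w_1\}$ of $\tree^d$, let $F_f$ be the set of edges of $\UST(G_n)$ among the $|H|$ edges joining $B_{w_0}$ to $B_{w_1}$. A spanning tree has $|F_f|\ge1$, and if $|F_f|=1$ then deleting that edge splits $\UST(G_n)$ precisely along this cut, so the $a$--$b$ path, having both endpoints on one side, cannot cross $f$. Hence, if the path meets a bag $B_u$ then $|F_f|\ge2$ for every tree-edge $f$ on the geodesic from $v$ to $u$. Since the events $\{|F_f|\ge2\}$ are increasing and depend on pairwise disjoint edge sets, the negative association of the uniform spanning tree gives $\Prob(B_u\cap(\text{path})\ne\emptyset)\le\prod_{f\in\mathrm{geod}(v,u)}\Prob(|F_f|\ge2)\le p_2(w)^{\mathrm{dist}_{\tree^d}(v,u)}$, where $p_2(w):=\sup_{f,n}\Prob_{\UST(G_n)}(|F_f|\ge2)$. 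As at most $d(d-1)^{m-1}$ vertices of $\tree^d$ lie at distance $m$ from $v$, this yields $\mathbb{E}_{\UST(G_n)}[\,|S|\,]\le 1+\tfrac{d}{d-1}\sum_{m\ge1}\big((d-1)p_2(w)\big)^m$, which is finite and independent of $n$ whenever $(d-1)p_2(w)<1$.

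It remains to make $p_2(w)$ small. For two edges $e,e'$ in the bundle across $f$, I would condition on $e\in\UST(G_n)$ and contract it; the two endpoints of $e'$ are then joined within $B_{w_0}\cup B_{w_1}$ by a path through at most $2\,\mathrm{diam}(H)$ edges of conductance $w$, so the effective resistance between them is at most $2\,\mathrm{diam}(H)/w$, whence $\Prob(e,e'\in\UST(G_n))\le\Prob(e'\in\UST(G_n)\mid e\in\UST(G_n))\le 2\,\mathrm{diam}(H)/w$. Summing over the pairs $\{e,e'\}$ in the bundle gives $p_2(w)\le|H|^2\,\mathrm{diam}(H)/w$. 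Hence there is $W<\infty$ with $(d-1)p_2(w)<1$ for every $w>W$; for such $w$ the expected distance is bounded uniformly in $n$, so $\Prob_{\FSFc}(a\leftrightarrow b)=1$ for every one of the finitely many pairs within a bag, and Lemma~\ref{elegbag} then gives that $\FSFc(\tree^d\gpr H)$ is connected almost surely.

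The part I expect to need the most care is the uniformity in $n$ of the estimate $p_2(w)\le|H|^2\,\mathrm{diam}(H)/w$ — one must check that the short, high-conductance path inside two adjacent bags is present in every $G_n$, which it is since it stays inside $B_{w_0}\cup B_{w_1}\subseteq G_n$ — together with the appeal to negative association in the multi-factor form $\Prob(\bigcap_f\{|F_f|\ge2\})\le\prod_f\Prob(|F_f|\ge2)$ along a geodesic (the Feder--Mihail / strongly Rayleigh property of $\UST$). Granting these standard facts, the whole argument rests on two elementary observations: meeting a distant bag forces $|F_f|\ge2$ at every intermediate tree-edge $f$, and $\{|F_f|\ge2\}$ across a single tree-edge has probability $O(1/w)$.
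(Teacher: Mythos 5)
Your proof is correct, and it takes a genuinely different route from the paper's. The paper proceeds dynamically: it invokes Wilson's algorithm to identify the $\UST$ path from $a$ to $b$ with a loop-erased random walk, decomposes the walk into ``trips'' in and out of the central bag, defines a notion of ``memorable'' bags which are the only ones that can survive loop erasure, and then shows that for large $w$ each trip is unlikely to have a memorable bag far away (the point being that with high conductance $w$, every time the walk passes through an intermediate bag it covers that bag, which wipes the more distant excursion out of the loop erasure). Your argument is static and combinatorial: you observe that the $\UST$ path from $a$ to $b$ can reach a bag at tree-distance $m$ only if every one of the $m$ intermediate bundles carries at least two tree edges (since a single-edge bundle is a bridge separating the endpoints from that bag), you use negative association of the weighted $\UST$ to factorize this event along the geodesic, and you bound the single-bundle probability $\Prob(|F_f|\ge2)$ by $O(1/w)$ via a contraction/effective-resistance estimate. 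This gives a geometric decay of bag-visit probabilities, a uniform bound on the expected $\UST(G_n)$ distance, and hence $\Prob_{\FSF_w}(a\leftrightarrow b)=1$ by Markov plus the cylinder-event characterization of weak convergence. Two things your approach buys: an explicit bound $W\le(d-1)\,|H|^2\,\mathrm{diam}(H)$, and no use of regularity of $H$ (the paper's proof uses regularity to decouple the tree-coordinate and $H$-coordinate steps of the walk, whereas your effective-resistance bound needs only that $H$ is finite and connected). One thing it requires as input, which the paper's proof does not, is the negative association (strongly Rayleigh / Feder--Mihail) property of the weighted $\UST$; as you note, this is standard. Both proofs go through Lemma~\ref{elegbag} in exactly the same way.
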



As before, denote by $T_n$ the ball of radius $n$ in $\tree^d$. Let $U$ be the {\it central bag} of $T_n \gpr wH$: the bag that corresponds to the center of this ball.


\begin{Def}
A {\it trip} is a walk $(X_1,X_2,..., X_T)$ such that 
$X_i\in U$ if and only if $i=1$ or $T$.
\end{Def}

\begin{Def}\label{memorable}
Bag $D$ is {\it memorable for a trip} $(X_1,..., X_T)$, if the trip intersects $D$, and satisfies the following. If $\tau\in [1,T]$ is the last step when $X_\tau\in D$, then for every bag
$D'$($\not=U,D$) that separates $U$ and $D$, $V(D')\not=V(D')\cap \{X_{\tau+1},..., X_T\}$.
\end{Def}


\begin{Prop}\label{erased}
Let ${\cal X}=(X_1,..., X_T)$ be a walk in $T_n \gpr wH$, with $X_1$ in the central bag $U$ of $T_n \gpr wH$. Suppose that ${\cal X}'=(X_k,X_{k+1},...,X_{k'})$ is some subwalk which is a trip and intersects bag $D$. Assume that $D$ is not memorable for ${\cal X}'$. Then the loop-erasure of $(X_1,...,X_{k'})$ does not intersect $D$.
\end{Prop}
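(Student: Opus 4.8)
The plan is to analyze the loop-erasure of $(X_1,\dots,X_{k'})$ directly from the definition, isolating the last visit of the walk to $D$. Let $\tau$ be the last step $\le k'$ with $X_\tau\in D$; since $\mathcal X'$ is a trip intersecting $D$ and $D\neq U$, we have $k<\tau<k'$, and in particular $\tau<k'$. Because $D$ is not memorable for $\mathcal X'$, there is a bag $D'\neq U,D$ separating $U$ and $D$ such that $V(D')\subseteq\{X_{\tau+1},\dots,X_{k'}\}$, i.e.\ after the last visit to $D$ the walk fully covers some separating bag $D'$.

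The key step is a separation/cut argument. Since $D'$ separates $U$ from $D$ in the product graph $T_n\gpr wH$, every path from a vertex of $D$ to a vertex of $U$ (in particular, to $X_1\in U$) must pass through $V(D')$. Now I would argue about where loop-erasure ``anchors'' the vertices of $D$. Write $\mathrm{LE}(X_1,\dots,X_{k'})$ for the loop-erased path, a simple path from $X_1$ to $X_{k'}$. I claim no vertex of $D$ survives on it. Suppose some $v\in V(D)$ does survive, occurring at position $j$ of the loop-erased path; let $i$ be the index in the original walk with $X_i=v$ that is retained. By the standard description of loop-erasure, a retained occurrence of $v$ is the \emph{last} visit of the walk to $v$ among the indices that ``survive'' up to that point; more usefully, the loop-erased path from that point onward is the loop-erasure of $(X_i,\dots,X_{k'})$, which never returns to $v$. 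Since $i\le\tau$ (as $\tau$ is the last visit to all of $D$, hence to $v$) wait—more carefully: the retained copy of $v$ sits at some index $i\le \tau$, and the suffix of the walk from $i$ to $k'$ contains the full sub-walk from $\tau+1$ to $k'$, which covers $V(D')$.

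So the loop-erasure of $(X_i,\dots,X_{k'})$ is a simple path starting at $v\in V(D)$ and ending at $X_{k'}$; but $X_{k'}$ — since $\mathcal X'=(X_k,\dots,X_{k'})$ is a trip ending in $U$, and actually we only need it lies on the ``$U$-side'' or beyond $D'$ — wait, I should instead use: $X_{k'}$ lies in $U$ if $k'=T$, or more robustly, the suffix covers $D'$ entirely. The clean way: the loop-erasure of $(X_i,\dots,X_{k'})$ is a simple path in $T_n\gpr wH$ from $v$, and it must eventually leave the ``$D$-side'' of $D'$ because the walk $(X_i,\dots,X_{k'})$ visits all of $V(D')$; a simple path crossing the cut $V(D')$ uses exactly one vertex of $D'$, say $w=X_\ell$ on the loop-erased path with $\ell$ the corresponding original index. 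But then every vertex of $V(D')\setminus\{w\}$ visited by the walk after index $i$ — and all of $V(D')$ is visited after $\tau\ge i$ — lies on a loop that gets erased; in particular $v\in V(D)$, being on the far side of the cut from where the path continues, is separated from $X_{k'}$ by $w$ alone, forcing $v$ to be visited before $w$ on the loop-erased path and never again. Iterating this reasoning I would push a contradiction: the only way for $v$ to survive is for the path to return to the $D$-side after $w$, which it cannot, since $w$ is the unique $D'$-vertex on a simple path and $D'$ separates. Hence $v$ does not survive, so $\mathrm{LE}(X_1,\dots,X_{k'})\cap V(D)=\emptyset$.

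The main obstacle is making the ``$v$ cannot survive'' step fully rigorous: one must combine the inductive structure of loop-erasure (the loop-erasure of a walk, read from a surviving vertex onward, equals the loop-erasure of the corresponding suffix walk) with the topological fact that a simple path crosses a separating bag in exactly one vertex, and then show that a surviving $D$-vertex would have to be re-entered after that single crossing vertex — contradicting simplicity. I would set this up as: if $v\in V(D)$ survived, take $w$ the unique vertex of $V(D')$ on the loop-erased path; since $V(D')$ separates $v$ from $X_{k'}$ and is covered by the walk after the last $D$-visit $\tau\ge$ (index of $v$), the loop-erased path must reach $w$, and after $w$ cannot return to $D$'s side, yet $v$ lies strictly on $D$'s side of $D'$ (here I use $D'\neq D$), so $v$ precedes $w$ on the path; but then among the walk's visits to $V(D')$, all those occurring before the retained $w$ get erased into loops that also erase the visit to $v$ — contradiction. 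The bookkeeping of ``which index is retained'' is the delicate part and is where I would spend the most care.
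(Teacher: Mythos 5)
Your approach (assume some $v\in V(D)$ survives the loop-erasure and derive a contradiction) is reasonable in spirit, but as written there is a genuine gap at its core. The claim that ``a simple path crossing the cut $V(D')$ uses exactly one vertex of $D'$'' is false in the situation you need it for: $\mathrm{LE}(X_1,\dots,X_{k'})$ is a simple path from $X_1\in U$ to $X_{k'}\in U$ that allegedly visits $v\in D$, and since $D'$ separates $U$ from $D$, this path must cross $V(D')$ \emph{both} on the way into $D$ and on the way back out, meeting at least two distinct $D'$-vertices. Your subsequent argument leans on the (wrong) uniqueness of $w$, and the final step --- ``all visits to $V(D')$ occurring before the retained $w$ get erased into loops that also erase the visit to $v$'' --- is asserted but never justified; indeed, loops entirely confined to times after $v$'s retained index $i$ can erase $D'$-visits without touching $v$.

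The fix, if you want to keep your counterfactual structure, is to focus on a $D'$-vertex \emph{before} $v$ rather than after. If $v$ is retained at index $i\le\tau$, then $\mathrm{LE}(X_1,\dots,X_i)$ is a simple path from $X_1\in U$ to $v\in D$, hence contains some $u'\in V(D')$ strictly before $v$. Since $v$ is retained, the loop-erasure never backs up past $v$ between times $i$ and $k'$, so $u'$ remains on the current loop-erased path throughout. But the walk revisits $u'$ at some time $s\in(\tau,k']$ because $V(D')\subseteq\{X_{\tau+1},\dots,X_{k'}\}$, and $s>\tau\ge i$; that revisit erases everything after $u'$ including $v$ --- contradiction. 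The paper's proof makes essentially this observation but without the counterfactual: it sets $t$ to be the first entry to $D'$ after $\tau$, lets $v$ be the first $D'$-vertex of $\mathcal L=\mathrm{LE}(X_1,\dots,X_t)$, notes the walk revisits $v$ at some $t'\ge t$ since $V(D')$ is fully covered after $\tau$, observes $\mathrm{LE}(X_1,\dots,X_{t'})$ is then the prefix of $\mathcal L$ up to $v$ (which is disjoint from $D$ because all of $D$ on $\mathcal L$ comes after the first $D'$-vertex), and concludes using that $D$ is not visited after $\tau<t'$. That version is cleaner because it produces a concrete time $t'$ at which the partial loop-erasure is already clean of $D$, rather than tracing through what ``survives.''
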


\begin{proof}
By our assumptions there exists a largest number $\tau$ with $k<\tau<k'$ and $X_\tau\in D$.
Since $D$ is not memorable, there exists a bag $D'$ that separates $U$ and $D$, and with the property that $V(D')=V(D')\cap \{X_{\tau+1},..., X_{k'}\}$. Let $t$ be the first time after $\tau$ that we enter 
$D'$. Such a $t$ exists, because $X_\tau\in D$ and $X_{k'}\in U$. Let ${\cal L}$ be LE$(X_1,..., X_{t})$. 
If ${\cal L}\cap D=\emptyset$, then the claim is proved, because we do not visit $D$ after $t>\tau$. Otherwise the first time that ${\cal L}$ enters $D'$ is strictly before $t$ (since ${\cal L}$ has to enter $D'$ before entering $D$ and reentering $D'$ at $X_t$). Let this first vertex of entrance be $v$. 
By assumption on ${\cal X}'$, $(X_t,..., X_{k'})$ visits every vertex of $D'$. Let $t'\geq t$ be the first time that $X_{t'}=v$. Then the loop-erasure of $(X_1,..., X_{t'})$ erases everything that happened after the first entrance to $D'$ at $v$. In particular, it erases every step in $D$ before $t'$, so LE$(X_1,..., X_{t'})\cap D=\emptyset$. Since $k'>t'$ (${\cal X}'$ is a trip), and no step after $t'>\tau$ is in $D$, the claim is proved.
\end{proof}

\begin{Lemma}\label{fascinationstreet}
There exists a $W>0$ such that 
for any $\alpha>0$ there is an $m$ such that the following holds for every $w>W$.
Let $u\in U$, $n>m$ and ${\cal X}=(X_1,..., X_T)$ be a trip in $T_n \gpr wH$
with 
$X_1=u$. Then
we have
$$\p({\cal X} \text{ has a memorable bag outside } T_m \gpr wH)<\alpha.$$
\end{Lemma}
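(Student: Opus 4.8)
The plan is to bound the probability that a trip $\mathcal{X}$ starting at $u\in U$ has a memorable bag at tree-distance greater than $m$ from the center. The key observation is that if a bag $D$ outside $T_m\gpr wH$ is memorable for $\mathcal{X}$, then by Definition~\ref{memorable} the portion of the trip after its last visit to $D$ must, on its way back to $U$, avoid filling up (i.e. visiting every vertex of) each of the $\geq m$ intermediate bags $D'$ separating $U$ from $D$. Since $H$ is finite, the central bag, each intermediate bag, and $D$ all have the same finite size $|H|$; crossing from one bag to an adjacent one is only possible through the $|H|$ ``tree edges'' joining corresponding vertices, which carry weight $1$. Inside a bag, the walk moves on edges of weight $w$. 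So first I would estimate, for a single bag $D'$ that the walk is currently inside, the probability that the walk exits $D'$ (toward $U$) before visiting all of $V(D')$: because the in-bag transitions have weight $w$ while the $|H|$ escape edges toward the parent (and the $|H|(d-1)$ edges toward children) have weight $1$, taking $w$ large makes the walk on $H\gpr wH$-type neighbourhoods behave like a random walk on $H$ that is very reluctant to leave before covering $H$. More precisely, at each vertex of $D'$ the chance of stepping out of the bag before the next in-bag step is $O(1/w)$, so the walk inside $D'$ will, with probability $1-O(1/w)$ per ``attempt'', take at least (say) $|H|^2$ steps inside $D'$ before leaving, which suffices (by a standard cover-time bound for the fixed finite graph $H$) to cover $D'$ with probability bounded below by some $p_0(|H|)>0$ uniformly in $w>W$.

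Next I would chain this over the $\geq m$ separating bags. Condition on the last visit to $D$ (the would-be memorable bag) at time $\tau$; the suffix $(X_{\tau+1},\dots,X_T)$ is a walk from a vertex of (a child of) $D$ back to $U$ that never returns to $D$. Each time this suffix enters one of the intermediate bags $D'_1,\dots,D'_m$ (ordered from $D$ toward $U$), it has probability at least $p_0$ of covering that bag before leaving it upward; and to reach $U$ it must pass through all of them in order. The subtlety is that the walk can backtrack — enter $D'_j$, leave downward, come back — so I would phrase the estimate as: for memorability to fail to be witnessed at $D'_j$, \emph{every} maximal excursion of the suffix inside $D'_j$ must fail to cover it, and there is at least one such excursion (the last one, after which the walk leaves $D'_j$ toward $U$ for good). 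Hence $\p(D \text{ memorable}\mid \text{last visit to }D) \leq (1-p_0)^{m}$. Taking a union bound over the bags $D$ outside $T_m\gpr wH$ is not directly affordable (there are infinitely many, or at least $\sim d^n$), so instead I would sum over the tree-distance $\ell>m$ of $D$ from the center: conditioned on the trip ever reaching distance $\ell$ (which already forces crossing $\ell$ bags), the conditional probability that some bag at exactly distance $\ell$ is memorable is at most $(1-p_0)^{\ell}$ times the number of distance-$\ell$ bags actually visited — but each visited distance-$\ell$ bag ``uses up'' the same separating structure, so more cleanly: the event ``$\mathcal{X}$ has a memorable bag at distance $\geq \ell$'' is contained in ``some bag at distance $\geq m$ is memorable'', and conditioning on the first such bag $D$ in the order the trip visits them, the argument above gives the factor $(1-p_0)^{m}$ uniformly. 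Thus $\p(\mathcal{X}\text{ has a memorable bag outside }T_m\gpr wH)\leq C(1-p_0)^{m}$ for a constant $C=C(|H|,d)$, and choosing $m$ with $C(1-p_0)^m<\alpha$ finishes the proof, with $W$ being the threshold from the cover-time estimate (which does not depend on $\alpha$ or $m$).

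I expect the main obstacle to be making the ``every maximal in-bag excursion fails to cover'' bookkeeping rigorous while keeping the bound uniform over $w>W$ and over all trips $\mathcal{X}$ — in particular, ensuring that ``there is always a last upward-leaving excursion in each separating bag'' is justified (it is, since the suffix ends in $U$ and must pass through each $D'_j$), and that the cover probability $p_0$ can be taken uniform in $w$ (here one uses that raising $w$ only \emph{helps}: the in-bag walk spends more steps per visit, so the cover probability is monotone increasing in $w$ for $w$ past the point where escape is the ``rare'' event, and at $w=W$ it is already some fixed positive number). A secondary technical point is handling the case where a separating bag $D'$ happens to coincide with $U$ or $D$ — but Definition~\ref{memorable} explicitly excludes $D'\in\{U,D\}$, and with $m\geq 1$ there are genuinely $\geq m$ other bags in between, so this is not an issue. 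One should also note the reduction is clean because $H$ regular forces the product graph to be vertex-transitive within the relevant finite window, so constants depend only on $|H|$ and $d$, not on the location of $D$.
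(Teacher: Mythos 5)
Your plan captures the same two‑stage structure as the paper's proof: (i) for large $w$, a random‑walk excursion through a bag covers the bag, with probability close to $1$, before it exits toward $U$, so that the ``last'' return trip from the far bag $D$ back to $U$ passes through all of the $\geq \ell-2$ intermediate bags and covers at least one of them with probability $\geq 1-\eps^{\ell-2}$; (ii) translate that into the statement that $D$ is not memorable with high probability, and then combine over all candidate bags $D$. The paper also uses your ``excursion covers the bag'' event, via a conditional‑independence argument across the chain of intermediate bags given their entry/exit vertices.

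Where your write‑up has a genuine gap is in the combination step (ii), and in how strong the covering estimate in (i) needs to be. You note correctly that there are $\sim d(d-1)^{\ell-1}$ bags at distance $\ell$, so that a naive union bound contributes $d(d-1)^{\ell-1}(1-p_0)^{\ell-2}$ at distance $\ell$; this sum converges \emph{only if} $(d-1)(1-p_0)<1$. Your ``$p_0=p_0(|H|)>0$ uniformly in $w>W$'' is far too weak: you need $p_0$ to be close enough to $1$ that $1-p_0<\frac1{d-1}$, and this constrains $W$ \emph{through $d$}, not only through $H$. The paper handles this by first fixing $\eps:=\frac1{2(d-1)}$ and only then choosing $W$ large enough so that the cover probability of a bag (conditioned on entry and exit vertices) exceeds $1-\eps$; the geometric series then has ratio $\eps(d-1)=\tfrac12$ and the per‑distance contribution $d(d-1)^{s-1}\eps^{s-2}$ sums to something $< \alpha$ once $m$ is large. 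Second, your attempted shortcut around the union bound --- ``conditioning on the first such bag $D$ in the order the trip visits them'' --- does not give the bound you assert: the first distance‑$>m$ bag visited need not be the memorable one, and memorable bags at different distances come with different depth factors, so you cannot extract a single uniform factor $(1-p_0)^m$ this way. There is nothing to circumvent here: the union bound is affordable once $(d-1)(1-p_0)<1$, and this is exactly why $W$ must be taken large depending on $d$. With that adjustment (choose $W$ so that $1-p_0 \le \eps := \frac1{2(d-1)}$, and sum over distances), your argument becomes the paper's.
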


\begin{proof}
Choose $\e:=\frac{1}{2(d-1)}$. Let $w>W$, where we specify $W$ at the end of this paragraph.
If $(Z_1,Z_2,...)$ is a random walk in $\tree^d \gpr wH$ started from a bag $B$, then 
let $\lambda$ be the first time when it exits $B$. If $w$ was large enough, we have for any starting vertex $x=Z_1$ and last vertex $y=Z_{\lambda}$
\begin{equation*}\label{longwalk}
\p(\{Z_1,...,Z_\lambda\}=B\mid\ Z_1=x,\ Z_{\lambda}=y
)>1-\eps,
\end{equation*}
because the minimum over $x$ and $y$ of the probability on the left tends to $1$ as $w$ goes to infinity. Fix $W$ so that the above inequality holds.

Fix bag $D$; we will use notation from Definition \ref{memorable}. Let $t$ be the first time after $\tau$
that we enter $D'$. Let $A_x$ be the event that the random walk started from a point $x\in D'$ visits every vertex of $D'$ before leaving $D'$. Denote by $\p_x$ the distribution of a random walk ${\cal Y}=(Y_1,...,Y_R)$ started from $x=Y_1$ and stopped at the first entrance to $U$. 
Let $B_x$ be the event that ${\cal Y}$ does not visit $D$. Then
\begin{equation}\label{longer}
\p ((X_{t},...,X_T)\in A_x |X_{t}=x)= \p_x (A_x| B_x)=
\p_x (A_x)\geq 1-\eps,
\end{equation}
where the last equality follows from the fact that $B_x$ is independent of $A_x$, because it only depends on the steps taken in the tree-coordinate and hence it is independent of the steps in the $H$-coordinates between two tree-coordinate steps. (To see this, note that the random walk path $(Y_1,...,Y_R)$ by $\p_x$ could be generated by first generating a suitable random walk path ${\cal T}$ in $\tree^d$, and then adding a suitably chosen random number of random $H$-steps in between every consecutive pair of steps of ${\cal T}$, independently from each other and from ${\cal T}$.)
Since $x$ was arbitrary, from \eqref{longer} we obtain 
\begin{equation}\label{maineq}
\p ((X_{t},...,X_T)\in A_{X_t})\geq 1-\eps.
\end{equation}

Let $U=D_1,D_2,..., D_\ell=D$ be the ray of bags between $U$ and $D$. Denote by $t_i$ the first time that ${\cal X}$ enters $D_i$ after $\tau$ and let $r_i$ be the first time exiting $D_i$ after $t_i$.
Finally, let $A^i$ be the the event that  $(X_{t_i},..., X_T)$ visits every vertex of $D_i$ before leaving $D_i$, in other words, $\{X_{t_i},..., X_{r_i} \}=D_i$.
Note that, conditional on $\{X_{t_i}\}$ and $\{X_{r_i}\}$, the events $\{A^j\}$ are independent, hence from the uniform lower bound \eqref{maineq} we have:
\begin{equation*}
\p \bigl(( \exists i\in[2,..., \ell-1] :\,\,  A^i  \mid   X_{t_2},...,X_{t_{l-1}},X_{r_2},...,X_{r_{l-1}}
\bigr)\geq 1-\eps^{\ell-2}.
\end{equation*}
Using the law of total probability 
\begin{equation*}\label{maineqnew}
\p \bigl(( \exists i\in[2,..., \ell-1] :\,\,  A^i 
\bigr)\geq 1-\eps^{\ell-2}.
\end{equation*}
We have just shown that $D$ is memorable for ${\cal X}$ with probability less than $\e^{\ell-2}$.
There are $d(d-1)^{s-1}$ vertices of $T_n$ with distance $s$ from the root for all $1 \leq s \leq n$, so
\begin{align*}
&\p({\cal X} \text{ has a memorable bag outside } T_m \gpr wH) \leq
 \sum_{B \not\in T_m \gpr wH} \p(B\text{ is memorable for } {\cal X} )\leq\\ &\sum_{s=m+1}^n \e^{s-2}d(d-1)^{s-1}=
\frac{d}{\e} \sum_{s=m+1}^n (\e(d-1))^{s-1}<\frac{d}{\e} (\e (d-1))^m \frac{1}{1-\e (d-1)}.
\end{align*}
By definition $\e (d-1)<1$. The number on the right hand side does not depend on $n$, thus we can choose $m$ big enough so that it is less than $\alpha$.
\end{proof}

\begin{proof}[Proof of Theorem \ref{ossztetel}]
Let $W$ be as in Lemma \ref{fascinationstreet}.

Let $\alpha_0>0$ be arbitrary.
We want to prove that if $w>W$ then for all vertices $a,b$ of $G$, there exists an $m$ with
$$\lim_{n \to \infty} \p(\text{LERW}_{T_n \gpr wH}(a \to b) \text{ leaves } T_m \gpr wH) \leq \alpha_0.$$ 
This is equivalent with the definition of connectedness of the $\FSFw$ in ${\tree^d \gpr H}$, because by Wilson's algorithm $\text{LERW}_{T_n \gpr wH}(a \to b)$ has the same distribution as the path between $a$ and $b$ in $\UST (T_n \gpr wH)$. 
By Lemma \ref{elegbag}., one may assume that $a$ and $b$ are in the same bag, $U$. 
Define $h$ as the minimum of the probability over all pairs $x\not=y\in U$ that random walk in $T_n \gpr wH$ started from $x$ hits $y$ before leaving $U$.
Let $k$ be a positive integer, chosen to satisfy $\p (\text{Geom}(h)\geq k)<\alpha_0/2$, where $\text{Geom}(h)$ denotes a geometric random variable of parameter $h$.




Choose $m$ as in Lemma \ref{fascinationstreet}, with $\alpha:=\alpha_0/2k$. Denote by ${\cal X}$ a random walk started from $a$ in $T_n \gpr wH$ and stopped when first hitting $b$. 
One can construct ${\cal X}$ as follows. Start random walk from $a$. If we hit $b$ before leaving $U$, we are finished, otherwise let $X_{s_1}$ be the last step of this walk in $U$ before first leaving $U$. Then consider the trip $(X_{s_1},..., X_{t_1})$. From the last vertex $ X_{t_1}\in U$ of this trip,
continue the random walk until either hitting $b$ or exiting $U$. The probability of the former is at least $h$; 
otherwise let $X_{s_2}$ be the last vertex in $U$ before leaving $U$, and starting from this vertex generate
the trip
$(X_{s_2},..., X_{t_2})$. Continue similarly, until at some point we hit $b$ and at that point the construction of ${\cal X}$ is finished. We see that after the end of every trip 
we had probability at least 
$h$ to hit $b$, hence the total number of trips needed is stochastically dominated by a geometric random 
variable of parameter $h$. Let $J$ be such a random variable. If $\text{LE}({\cal X})$ intersects bag $D$, then $D$ is memorable for one of the sub-trips of ${\cal X}$ by Proposition \ref{erased}. The probability that a trip has a memorable bag outside
of $T_m \gpr wH$ is less than $\alpha$ by Lemma \ref{fascinationstreet}. 
A union bound gives us 
\begin{equation*}
\p(\text{LE}({\cal X}) \text{ leaves } T_m \gpr cH) <
\p(J> k)+
k\alpha
\leq \alpha_0,
\end{equation*}
completing the proof.
\end{proof}

\begin{proof}[Proof of Theorem \ref{large_small}]
The first part of the theorem is essentially Theorem \ref{ossztetel}. 

For the second part, the case of small $w$, we have the same conditions on the graph as in the unweighted case of Theorem 1.1 in \cite{PT} and
one could repeat the arguments therein, with minor modifications which we sketch next. The result of Section 2 about random walk on the tree is obviously unchanged, while Lemma 3.1 also remains valid, with a different constant $b$, for the following reason. Replace $2k^6$ in (3.2) by $2k^6/w$. Then the entire paragraph containing (3.2) remains valid if we change every occurrence of $d$ to $wd$, and that of $2k^6$ to $2k^6/w$. The rest of the proof of Lemma 3.1 in \cite{PT} goes through without any change. The ``second ingredient'', as explained after the proof of Lemma 3.1, is based on the fact that random walk does not spend much time in a bag. The key stochastic domination results are even ``more true'' than in \cite{PT} when we have small weights on the $H$-edges, while the parts about random walk within a bag, such as Lemma 3.2,
remain unchanged. 
The rest of the proof is automatically adapted to our setting.
\end{proof}

\section*{Acknowledgement} 
\noindent
The first three authors would like to thank the Rényi REU 2020 program for undergraduate research.
The third author is partially supported by the ÚNKP-20-1 New National Excellence Program of the Ministry for Innovation and
Technology from the source of the National Research, Development and Innovation Fund. \\
The last author was partially supported by the ERC Consolidator Grant 772466 ``NOISE'', and by Icelandic Research Fund Grant 185233-051.

\bigskip

\noindent
{\bf Marcell Alexy}\\
E\"otv\"os L\'or\'and University \\
H-1117 Budapest, Pázmány Péter sétány 1/C \\
\texttt{alexy.marcell[at]hotmail.com}
\medskip
\ \\
{\bf M\'arton Borb\'enyi}\\
E\"otv\"os L\'or\'and University \\
H-1117 Budapest, Pázmány Péter sétány 1/C \\
\texttt{marton.borbenyi[at]gmail.com}
\medskip
\ \\
{\bf Andr\'as Imolay}\\
E\"otv\"os L\'or\'and University \\
H-1117 Budapest, Pázmány Péter sétány 1/C \\
\texttt{imolay.andras[at]gmail.com}
\medskip
\ \\
{\bf \'Ad\'am Tim\'ar}\\
Division of Mathematics, The Science Institute, University of Iceland\\
Dunhaga 3 IS-107 Reykjavik, Iceland\\
and\\
Alfr\'ed R\'enyi Institute of Mathematics\\
Re\'altanoda u. 13-15, Budapest 1053 Hungary\\
\texttt{madaramit[at]gmail.com}\\


\begin{thebibliography}{AAA}


\bibitem{HuNa}
T. Hutchcroft and A. Nachmias.
Indistinguishability of trees in uniform spanning forests.
{\it Probab. Theory Related Fields} {\bf 168} (2017), 113--152.


\bibitem{LP}
R. Lyons and Y. Peres. {\it Probability on Trees and Networks.}
Cambridge University Press, New York, 2016. Available at \url{http://pages.iu.edu/\textasciitilde rdlyons/}

\bibitem{PT}
G. Pete and \'A. Tim\'ar.
The Free Uniform Spanning Forest is disconnected
in some virtually free groups, depending on the generator set (2020), 
\url{arXiv:2006.06387}

\bibitem{Ta}
P. Tang.
Weights of uniform spanning forests on nonunimodular transitive graphs (2019),
\url{arXiv:1908.09889}

\bibitem{Ti}
\'A. Tim\'ar. Indistinguishability of the components of random spanning forests.
{\it Ann. Probab.} {\bf 46} (2018), 2221--2242.

\bibitem{Wi}
D. B. Wilson. Generating random spanning trees more quickly than the cover time. {\it Proceedings of the Twenty-Eighth Annual ACM Symposium on the Theory of Computing}, pp.~296--303., New York, 1996. 


\end{thebibliography}
\end{document}